\documentclass{eptcs}

\usepackage{amsmath,amsthm,amssymb,xspace,stmaryrd,microtype,IEEEtrantools,tikz}



\usetikzlibrary{positioning,calc,matrix,arrows}
\tikzstyle{mor}=[execute at begin node=$\scriptstyle, execute at end node=$]
\tikzstyle{commutative diagram} = [text height=1.5ex, text depth=.25ex,
    row sep=1cm, column sep=1cm]

\newcommand{\overbar}[1]{\mkern 1.5mu\overline{\mkern-1.5mu#1\mkern-1.5mu}\mkern 1.5mu}
\newcommand{\positive}{^+}

\newcommand{\eaplus}{\mathop\boxplus}
\newcommand{\coproduct}{\text{\rotatebox[origin=c]{180}{$\prod$}}}

\newcommand{\Zord}[1]{\mathcal{Z}_{\leq}^{#1}}
\newcommand{\Hord}[1]{\mathrm{H}_{\leq}^{#1}}
\newcommand{\CHord}[1]{\mathrm{H}_{\leq}^{#1}}

\newcommand{\Pow}{\mathcal{P}}
\newcommand{\Yoneda}{\mathcal{Y}}

\newcommand{\cycle}[1]{C_{#1}}
\newcommand{\tests}{T}
\newcommand{\circlegroup}{\ensuremath{\mathbb{S}^1}}

\newcommand{\Cat}[1]{\ensuremath{\mathbf{#1}}}

\newcommand{\Sets}{\Cat{Sets}\xspace}

\newcommand{\cSets}{\Cat{cSets}\xspace}

\newcommand{\EA}{\Cat{EA}\xspace}

\newcommand{\Cycl}{\Lambda\xspace}

\newcommand{\op}{^{\mathrm{op}}} 


\newcommand{\Proj}{\mathcal{P}\!roj}

\DeclareMathOperator{\Hom}{Hom}
\DeclareMathOperator{\St}{St}
\DeclareMathOperator{\im}{im}
\DeclareMathOperator{\HC}{HC}
\DeclareMathOperator{\HH}{HH}
\DeclareMathOperator{\Disc}{Disc}
\DeclareMathOperator{\precone}{prec}

\newtheorem{theorem}{Theorem}[section]
\newtheorem{lemma}[theorem]{Lemma}
\newtheorem{corollary}[theorem]{Corollary}
\newtheorem{proposition}[theorem]{Proposition}
\theoremstyle{definition}
\newtheorem{definition}[theorem]{Definition}
\newtheorem{example}[theorem]{Example}
\newtheorem{examples}[theorem]{Examples}
\theoremstyle{remark}

\usepackage{hyperref}

\title{Cohomology of Effect Algebras}
\author{Frank Roumen
    \institute{Computer Laboratory, University of Cambridge \\
        15 JJ Thomson Avenue, Cambridge CB3 0FD, United Kingdom}
    \email{fr338@cl.cam.ac.uk}}

\begin{document}

\maketitle

\begin{abstract}
    We will define two ways to assign cohomology groups to effect algebras,
    which occur in the algebraic study of quantum logic. The first way is
    based on Connes' cyclic cohomology. The resulting cohomology groups are
    related to the state space of the effect algebra, and can be computed
    using variations on the K\"unneth and Mayer--Vietoris sequences. The
    second way involves a chain complex of ordered abelian groups, and gives
    rise to a cohomological characterization of state extensions on effect
    algebras. This has applications to no-go theorems in quantum foundations,
    such as Bell's theorem.
\end{abstract}

\section{Introduction}
\label{sec:Intro}

Cohomology groups can be assigned to various mathematical structures, such as
topological spaces or groups, and are frequently helpful to classify certain
properties of the structure. For example, the cohomology groups of a
topological space provide information about the holes in the space, and the
second cohomology group of a group classifies its extensions. The main purpose
of this article is to define cohomology of effect algebras, and study its
applications to no-go theorems in quantum foundations.

Effect algebras were introduced in \cite{FoulisB94} as an abstract framework
for unsharp measurements in quantum mechanics. There are two reasons why their
cohomology may be applicable to no-go theorems. Firstly, as shown in
\cite{AbramskyMS11} based on earlier work in \cite{AbramskyB11}, sheaf
cohomology of measurement covers has proven to be fruitful in the
investigation of locality and non-contextuality. Measurement covers are
loosely related to effect algebras via the framework of test spaces.
Therefore one expects that the techniques used in \cite{AbramskyMS11} have
analogues in the world of effect algebras.  Secondly, in \cite{StatonU15} it
has been shown that the Bell paradox can be formulated in terms of
(non)-existence of factorizations in the category of effect algebras. Since
cohomology is often used to determine whether factorizations exist, a
cohomology theory of effect algebras will allow us to examine Bell's theorem
in a new way.

We will propose two different cohomology theories for effect algebras. The
first definition of cohomology is based on Connes' cyclic cohomology from
\cite{Connes83}. This is inspired by the close connection between effect
algebras and the abstract circles from \cite{Moerdijk96}. Since abstract
circles are relevant for cyclic cohomology, it is natural to consider cyclic
cohomology of effect algebras.

Most cohomology theories are obtained by assigning a sequence of abelian
groups to a mathematical object. Since effect algebras are ordered structures,
it will turn out to be productive to use a sequence of ordered abelian groups
instead. This will lead to the second definition of cohomology, which we call
order cohomology. It is loosely related to Pulmannov\'a's classification of
extensions of certain ordered algebraic structures in \cite{Pulmannova06}.

Both approaches for defining cohomology have advantages and disadvantages.
Cyclic cohomology is more suited for theoretical investigations, since it
opens up the possibility of using the powerful techniques from homological
algebra. For example, we will show how cyclic cohomology interacts with
products, coproducts, intersections, and unions of effect algebras. For order
cohomology, it is less clear what the interactions are, due to a lack of
general theory of homological algebra for ordered abelian groups. On the other
hand, order cohomology lends itself better to applications to quantum
mechanical no-go theorems. We will provide cohomological characterizations for
when a state on a certain probabilistic system is classically realizable, for
both cyclic and order cohomology. In the cyclic case, we only obtain a
necessary condition for realizability, so in certain scenarios false positives
may arise. A similar phenomenon occurs in the cohomological analysis of
contextuality in \cite{AbramskyMS11}. Order cohomology repairs this defect of
cyclic cohomology, since the order allows us to obtain a necessary and
sufficient condition for realizability of states.

The outline of this paper is as follows. Section~\ref{sec:EffectAlgebras}
contains preliminaries on effect algebras. Since the chain complexes
associated to an effect algebra are all based on the tests on the algebra,
Section~\ref{sec:Tests} continues the preliminaries by considering the
interplay between an effect algebra and its tests. In particular we define
cyclic sets and show that the tests on an effect algebra form a cyclic set.
Sections~\ref{sec:CyclicCohomologyEffectAlgebra} through
\ref{sec:Applications} are all about cyclic cohomology.
Section~\ref{sec:CyclicCohomologyEffectAlgebra} defines cyclic cohomology of
effect algebras and gives interpretations of the cyclic cohomology groups in
degrees 0 and 1. In particular, we show that the first cohomology group is
closely related to the state space of the effect algebra. Then we will turn to
computational techniques for determining cyclic cohomology groups. Often these
involve relative cohomology groups, defined in
Section~\ref{sec:RelativeCohomology}. As an application, we will prove that
cyclic cohomology preserves coproducts. The cohomology of a product can be
computed using the K\"unneth sequence (Section~\ref{sec:Kunneth}), and the
Mayer--Vietoris sequence is helpful for determining the cohomology of a union
of subalgebras (Section~\ref{sec:MayerVietoris} and
\ref{sec:GeneralizedMayerVietoris}).
Section~\ref{sec:Applications} treats applications to no-go theorems.
Finally, in Section~\ref{sec:OrderCohomology}, we switch to order cohomology.
There we will present various facts about ordered abelian groups, culminating
in a definition of cohomology of a chain complex of ordered abelian groups and
applications to effect algebras and no-go theorems.

\section{Effect algebras}
\label{sec:EffectAlgebras}

There are several proposals about what the right logic of quantum mechanics
should be. The first attempt to formalize quantum logic was made by Birkhoff
and von Neumann in \cite{BirkhoffN36}, based on orthomodular lattices. Effect
algebras generalize orthomodular lattices by also incorporating the
probabilistic structure of quantum physics. These were introduced in
\cite{FoulisB94}; an overview of the theory can be found in
\cite{DvurecenskijP00}.

Effect algebras form an abstract generalization of the unit interval $[0,1]
\subseteq \mathbb{R}$. This interval carries a partial addition: the sum of
two elements may or may not lie in the unit interval again. Furthermore, it
has a minimal and a maximal element, and complements with respect to the
maximal element. We capture the algebraic structure of $[0,1]$ in the notion
of an effect algebra.

\begin{definition}
    An \emph{effect algebra} consists of a set $A$ equipped with a partial
    binary operation $\eaplus$, a unary operation $(-)^\bot$ and elements
    $0,1 \in A$, such that:
    \begin{itemize}
        \item Commutativity: if $a \eaplus b$ is defined, then so is $b
            \eaplus a$, and $a \eaplus b = b \eaplus a$.
        \item Associativity: if $a \eaplus b$ and $(a \eaplus b) \eaplus c$
            are defined, then so are $b \eaplus c$ and $a \eaplus (b
            \eaplus c)$, and $(a \eaplus b) \eaplus c = a \eaplus (b
            \eaplus c)$.
        \item Zero: $0\eaplus a$ is always defined and equals $a$.
        \item Orthocomplement: for each $a \in A$, $a^\bot$ is the unique
            element for which $a \eaplus a^\bot = 1$.
        \item Zero-one law: if $a\eaplus 1$ is defined, then $a=0$.
    \end{itemize}
\end{definition}

Every effect algebra carries an order. Define $a \leq b$ if and only if there
exists an element $c$ such that $a \eaplus c$ exists and equals $b$. The
axioms for an effect algebra guarantee that $\leq$ is a partial order.

It is easy to see that the unit interval $[0,1]$ forms an effect algebra,
where $\eaplus$ is partial addition and the orthocomplement is given by
$a^\bot = 1-a$. Furthermore, there are several examples coming from quantum
logic. Each orthomodular lattice is an effect algebra, in which $a \eaplus b$
is defined if and only if $a$ and $b$ are disjoint, that is $a \wedge b = 0$.
In that case, $a \eaplus b$ is defined to be $a \vee b$.
The orthocomplement is already included in the structure of an orthomodular
lattice.

Also each unital C*-algebra induces an effect algebra. An element $a$ in a
C*-algebra $A$ is called \emph{positive} if it can be written as $a = b^*b$
for some $b\in A$.
Define an order on the collection of self-adjoint elements by putting $a \leq
b$ if and only if $b-a$ is positive. Then the unit interval $[0,1]_A = \{ a
\in A \mid a \text{ is self-adjoint and } 0 \leq a \leq 1 \}$ is an effect
algebra, where $a\eaplus b$ is defined if and only if $a + b \leq 1$, and in
that case $a\eaplus b = a+b$. The orthocomplement is $a^\bot = 1-a$.

The above example is a special case of a unit interval in an ordered
abelian group. An ordered abelian group is simply an abelian group equipped
with a partial order that is compatible with the addition. If $u$ is any
positive element in an ordered abelian group $A$, then $[0,u]$ forms an effect
algebra, again with partial addition as operation and $a^\bot = u-a$ as
orthocomplement. An effect algebra that is isomorphic to an interval in some
ordered abelian group is called an \emph{interval effect algebra}, see
\cite{BennettF97}.

There are several notions of morphisms between effect algebras; we will need
two of these. A function $f$ from $A$ to $B$ is simply called a
\emph{morphism} if:
\begin{itemize}
    \item $f$ preserves $0$, $1$, and complements.
    \item If $a \eaplus b$ is defined, then also $f(a) \eaplus f(b)$ is
        defined, and $f(a\eaplus b) = f(a) \eaplus f(b)$.
\end{itemize}
The notation $\EA$ stands for the category of effect algebras with morphisms
in this sense. A \emph{strong} morphism is a morphism for which the condition
that $f(a) \eaplus f(b)$ is defined implies that also $a \eaplus b$ is
defined. Sometimes ordinary morphisms are called \emph{weak} to distinguish
them from strong morphisms.  Most morphisms encountered in the theory of
effect algebras are weak, and the category of effect algebras with weak
morphisms has better properties than the category with strong morphisms, which
is why we usually omit the adjective ``weak''.

The distinction between these two kinds of morphisms can be made for all
partial algebraic structures. The terminology that we use here comes from
\cite{Gratzer68}. Sometimes strong morphisms are called closed morphisms,
following \cite{Burmeister86}. In the effect algebra literature, one sometimes
encounters the term monomorphism. However, we will avoid this term, due to
possible confusion with the categorical notion of monomorphism.

A \emph{subalgebra} of an effect algebra $B$ is a subset $A \subseteq B$
such that, whenever $a,a' \in A$ and $a \eaplus a'$ is defined in $B$,
then $a \eaplus a'$ lies in $A$. Equivalently, $A$ is a subalgebra of $B$
whenever the inclusion map $A \hookrightarrow B$ is a strong injective
morphism of effect algebras.

The category $\EA$ is complete and cocomplete, and possesses a well-behaved
tensor product, as proven in \cite{JacobsM12}. We will regularly need
products, coproducts, and tensor products, so we will describe these briefly
here. The product of effect algebras is simply the cartesian product with
pointwise operations.

To construct the coproduct of $A$ and $B$, put an equivalence relation $\sim$
on their disjoint union $A \coproduct B$ by identifying $0_A$ with $0_B$ and
$1_A$ with $1_B$. The coproduct $A + B$ is then the quotient $(A \coproduct B)/
\sim$. Denote the coprojections $A \to A+B$ and $B \to A+B$ by $\iota_A$ and
$\iota_B$, respectively. Then the sum of two elements $\iota_A(a_1)$ and
$\iota_A(a_2)$ is defined if and only if $a_1 \eaplus a_2$ is defined in $A$,
and in that case $\iota_A(a_1) \eaplus \iota_A(a_2) = \iota_A(a_1 \eaplus
a_2)$. Likewise one defines the sum of $\iota_B(b_1)$ and $\iota_B(b_2)$. The
sum of $\iota_A(a)$ and $\iota_B(b)$ is never defined for $a \neq 0,1$ and $b
\neq 0,1$. The orthocomplement in $A+B$ is derived from the ones in $A$ and
$B$.

A \emph{bimorphism} of effect algebras is a map $f: A \times B \to C$ that 
preserves addition in both variables separately, and satisfies $f(1,1) = 1$.
In \cite{JacobsM12} it is shown that any two effect algebras $A$ and $B$ have
a tensor product $A \otimes B$, which is constructed in such a way that
bimorphisms $A \times B \to C$ correspond bijectively to morphisms $A \otimes
B \to C$.

In \cite{FoulisGreechieBennett94} it has been shown that interval effect
algebras are stable under all the constructions described above. More
precisely we have the following.

\begin{proposition}
    \mbox{}
    \begin{enumerate}
        \item Any subalgebra of an interval effect algebra is again an
            interval effect algebra.
        \item If $A$ and $B$ are interval effect algebras, then so are $A
            \times B$, $A + B$, and $A \otimes B$.
    \end{enumerate}
    \label{prop:ConstructionsInterval}
\end{proposition}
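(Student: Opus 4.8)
The plan is to realize each of the four resulting effect algebras explicitly as an interval $[0,w]$ inside a partially ordered abelian group assembled from groups witnessing that the inputs are interval effect algebras. Throughout, fix ordered abelian groups $G$ and $K$ together with positive elements $u \in G$ and $v \in K$ such that $A \cong [0,u]$ and $B \cong [0,v]$; these exist by the definition of an interval effect algebra. Each part then amounts to naming a group, a positive element, and an order, and checking that the interval they determine is the desired effect algebra.

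For the first statement, suppose $A \subseteq B \cong [0,u]$ with ambient group $G$, and let $H = \langle A \rangle \leq G$ be the subgroup generated by $A$, equipped with the induced order $H^+ = H \cap G^+$; then $(H,u)$ is again an ordered abelian group with positive element $u$. I would identify $A$ with the interval $[0,u]$ formed inside $H$ by proving $A = [0,u] \cap H$. The inclusion $A \subseteq [0,u] \cap H$ is immediate. For the reverse inclusion one exploits that a subalgebra is closed not only under defined sums $\eaplus$ and orthocomplementation but also under truncated difference: if $a \leq a'$ in $A$, then $a' \ominus a = \bigl(a \eaplus (a')^\bot\bigr)^\bot$ again lies in $A$. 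Given $h \in H$ with $0 \leq h \leq u$, I would write $h$ as a $\mathbb{Z}$-combination of elements of $A$ and induct on the length of such an expression, collapsing a pair of like-signed terms whose sum stays below $u$ into a single $\eaplus$, and a comparable pair of opposite-signed terms into a single $\ominus$, until only one element of $A$ remains. Verifying that this reduction always terminates while staying inside $[0,u]$ is the delicate point, and I expect it to be the main obstacle in part~(1).

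For part~(2), the product is direct: $A \times B$ is the interval $[0,(u,v)]$ in $G \oplus K$ with the coordinatewise order, since $[0,(u,v)] = [0,u] \times [0,v]$ and the pointwise operations agree. For the coproduct I would take $L = (G \oplus K)/\langle (u,-v)\rangle$, which identifies the two units, let $w$ be the common image of $(u,0)$ and $(0,v)$, and order $L$ by the cone generated by the images of $G^+ \oplus \{0\}$ and $\{0\} \oplus K^+$. The coprojections become $a \mapsto [(a,0)]$ and $b \mapsto [(0,b)]$, and the crux is that $[0,w]$ contains exactly these images: for a genuinely mixed class $[(a,b)]$ with $0 < a < u$ and $0 < b < v$, every representative $(u - a + nu,\, -b - nv)$ of $w - [(a,b)]$ has a negative coordinate, so $w - [(a,b)] \not\geq 0$ and such elements never enter the interval. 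This recovers precisely the defining feature of $A + B$, namely that no sum of two proper elements from different summands is defined. For the tensor product I would form the tensor product $G \otimes K$ of abelian groups, order it by the cone generated by the elements $g \otimes k$ with $g \in G^+$ and $k \in K^+$, set $w = u \otimes v$, and identify $A \otimes B$ with $[0,u \otimes v]$, matching the bijection between bimorphisms $A \times B \to C$ and morphisms $A \otimes B \to C$ against the universal property of $\otimes$ for abelian groups restricted to positive bilinear maps.

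The algebra of the operations is routine in every case; the substantive work, and the principal obstacle, is the order theory. Concretely, one must check that the cones defined on $L$ and on $G \otimes K$ are proper, i.e.\ $C \cap -C = \{0\}$, so that the quotient and the tensor genuinely are ordered abelian groups with $w$ positive, and that the associated intervals acquire no elements beyond the images of $A$ and $B$. These pointedness and positivity verifications, together with the termination of the reduction in part~(1), carry the content of the proposition; following \cite{FoulisGreechieBennett94}, a clean way to organize them is through the universal group of an effect algebra, for which strong injective morphisms, products, coproducts, and tensors are preserved functorially.
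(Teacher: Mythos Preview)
The paper does not prove this proposition at all; it states the result and attributes it to \cite{FoulisGreechieBennett94}. So there is no in-paper argument to compare against---only the reference, which indeed proceeds via the universal group (unigroup) construction that you invoke in your final sentence.

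Your explicit construction for part~(1), however, has a genuine gap: the assertion $A = H \cap [0,u]$, with $H = \langle A\rangle$ carrying the \emph{induced} order $H^+ = H \cap G^+$, is false. Take $G = \mathbb{Z}^2$ with the componentwise order and $u = (3,3)$, and let
\[
A = \{(0,0),\ (1,2),\ (2,1),\ (3,3)\}.
\]
This is a sub-effect-algebra of $[0,u]$: the only nontrivial sum defined in $B$ among these elements is $(1,2)\eaplus(2,1) = (3,3)$, and complements are exchanged. The subgroup $H = \langle (1,2),(2,1)\rangle$ contains $(3,0) = 2(2,1) - (1,2)$, which lies in $H \cap [0,u]$ but not in $A$. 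Your reduction procedure stalls on exactly this expression: the like-signed pair sums to $(4,2)\nleq u$, and the opposite-signed pair $(2,1),(1,2)$ is incomparable, so neither collapse rule fires. The fix is not to induce the order from $G$ but to order $H$ by the cone \emph{generated} by $A$ itself; that is precisely what the unigroup does, and is why the functorial route through universal groups succeeds where restricting the ambient order fails.

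For part~(2) your product argument is fine and the coproduct construction is essentially correct modulo the pointedness checks you already flag. The tensor case is the most fragile: for arbitrary ambient groups $G,K$ the cone on $G\otimes K$ generated by positive pure tensors need not be proper, and even when it is, identifying $[0,u\otimes v]$ with the effect-algebra tensor $A\otimes B$ is not automatic from the abelian-group universal property alone. Here again the argument in \cite{FoulisGreechieBennett94} works with the unigroups $G_A$, $G_B$ rather than arbitrary ambient groups, which is what makes the positivity and interval identifications go through.
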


In many physical theories, the duality between states and measurements plays
an important role. If we model measurements using an effect algebra $A$, then
a \emph{state} on $A$ is a (weak) morphism from $A$ to $[0,1]$. In many
examples of effect algebras, this indeed yields a reasonable notion of state
or probability measure. For example, a state on the effect algebra $[0,1]^X$,
where $X$ is a finite set, is a probability distribution on $X$. If $A$ is a
C*-algebra, then a state on the effect algebra $[0,1]_A$ is the same as a
state on $A$ in the C*-algebraic sense.

The state space of an effect algebra is especially interesting if it contains
enough information to recover the order on the effect algebra. This is
unfortunately not the case for all effect algebras. However, a result by
Dvure\v{c}enskij \cite{Dvurecenskij10} based on earlier work by Goodearl
\cite{Goodearl86} shows that it does hold for Archimedean interval effect
algebras. Roughly speaking, an effect algebra is Archimedean if it has no
infinitesimal elements. The notion can only be defined in the case of interval
effect algebras.

\begin{definition}
    An ordered abelian group is called \emph{Archimedean} if it satisfies the
    following property: whenever $nx \leq y$ for all $n\in \mathbb{N}$, then
    $x \leq 0$. An interval effect algebra is called Archimedean if its
    ambient group is Archimedean.
\end{definition}

The result by Dvure\v{c}enskij and Goodearl is as follows.

\begin{theorem}
    \label{ThmArchimedeanOrderDetermining}
    An interval effect algebra is Archimedean if and only if its state
    space is \emph{order-determining}, which means: if for all states $\sigma$
    we have $\sigma(a) \leq \sigma(b)$, then $a\leq b$.
\end{theorem}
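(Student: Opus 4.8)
The plan is to reduce the statement about the effect algebra to a separation theorem for its ambient ordered abelian group, and then to prove that separation theorem by a Hahn--Banach argument. Write the interval effect algebra as $[0,u]$ for a positive element $u$ of an ordered abelian group $G$; replacing $G$ by the subgroup generated by $[0,u]$, I may assume $G$ is directed and that $u$ is an order unit (every element lies below some multiple of $u$). The first step is the standard observation that a state $\sigma \colon [0,u] \to [0,1]$ extends uniquely to a \emph{group state}, i.e.\ a positive homomorphism $s \colon G \to \mathbb{R}$ with $s(u) = 1$, and conversely every group state restricts to a state on $[0,u]$; moreover the effect-algebra order is exactly the restriction of the group order, since $a \leq b$ in $[0,u]$ iff $b - a \geq 0$ in $G$. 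Under this correspondence the theorem becomes: $(G,u)$ is Archimedean if and only if every $c \in G$ with $s(c) \geq 0$ for all states $s$ already satisfies $c \geq 0$. Translating a group-level element $c$ back into a genuine difference $b - a$ of elements of $[0,u]$ is a technical point handled using only that $u$ is an order unit, so one may rescale; I expect this to be the secondary nuisance of the argument.

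For the direction ``order-determining $\Rightarrow$ Archimedean'' I would argue contrapositively. If $(G,u)$ is not Archimedean, choose $x,y$ with $nx \leq y$ for all $n \in \mathbb{N}$ but $x \not\leq 0$. Applying any state gives $n\,s(x) = s(nx) \leq s(y)$ for all $n$, and since $s(y)$ is a fixed real number this forces $s(x) \leq 0$. Hence $s(-x) \geq 0$ for every state while $-x \not\geq 0$, so $c = -x$ witnesses the failure of the group separation property; unwinding the correspondence and rescaling into the interval produces $a,b \in [0,u]$ with $\sigma(a) \leq \sigma(b)$ for all states $\sigma$ yet $a \not\leq b$, so the state space is not order-determining.

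The substantive direction is ``Archimedean $\Rightarrow$ order-determining'', and here the engine is Hahn--Banach. Given $c \in G$ with $c \not\geq 0$, the goal is to build a state $s$ with $s(c) < 0$. I would work in the real vector space $V = G \otimes_{\mathbb{Z}} \mathbb{R}$ and introduce the gauge
\[
    p(g) = \inf\Bigl\{ \tfrac{m}{n} : m \in \mathbb{Z},\ n \in \mathbb{N},\ ng \leq mu \Bigr\},
\]
which is real-valued (as $u$ is an order unit) and sublinear, and which extends to $V$. The key lemma, and the only place the hypothesis enters, is that the Archimedean property makes $p$ detect the order exactly: $p(g) \leq 0$ if and only if $g \leq 0$. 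Granting this, $p(-c) > 0$ precisely because $c \not\geq 0$, so on the line $\mathbb{R}c$ one can choose $\beta$ with $-p(-c) \leq \beta \leq p(c)$ and $\beta < 0$ and set $f(tc) = t\beta$, a linear functional dominated by $p$ with $f(c) < 0$; Hahn--Banach extends $f$ to $s \colon V \to \mathbb{R}$ with $s \leq p$. Domination by $p$ forces $s$ to be positive (if $g \geq 0$ then $p(-g) \leq 0$, so $-s(g) = s(-g) \leq p(-g) \leq 0$) and to satisfy $s(u) = 1$ (since $p(u) = 1$ and $p(-u) = -1$), so $s$ is a state with $s(c) < 0$, as required.

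I expect the main obstacle to be exactly the gauge lemma. Checking that $p$ is finite and sublinear is routine bookkeeping with the order unit, but proving $p(g) \leq 0 \Rightarrow g \leq 0$ is where the ``for all $n$'' content of the Archimedean axiom must be converted into a statement about the infimum defining $p$, and getting this conversion right is the crux on which the whole separation argument rests.
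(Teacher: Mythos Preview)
The paper does not prove this theorem at all: it is quoted as a result of Dvure\v{c}enskij building on Goodearl, and the text moves directly to the next section. So there is no ``paper's own proof'' to compare against; you are reconstructing the cited argument.

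Your outline is the standard Goodearl route (essentially Theorem~4.14 of his \emph{Partially Ordered Abelian Groups with Interpolation}) and is sound in its architecture: pass to the ambient unital group, identify effect-algebra states with group states, and for the nontrivial direction separate a non-positive element from the cone by a Hahn--Banach extension dominated by the order-unit gauge $p$. Two technical points deserve more than the single sentence you give them. First, working in $V=G\otimes_{\mathbb Z}\mathbb R$ only helps if $G\to V$ reflects the order; this is not automatic, but it follows once you observe that an Archimedean group with order unit is torsion-free and unperforated (if $ng\geq 0$ then writing $m=kn+r$ shows $-mg$ is bounded above by a fixed multiple of $u$, whence $g\geq 0$ by the Archimedean hypothesis). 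Without this lemma the passage to $V$ could lose the information that $c\not\geq 0$. Second, the ``rescaling into the interval'' in the easy direction is genuinely a nuisance: from a group element $x$ with $x\not\leq 0$ and $s(x)\leq 0$ for all states you land in $[0,Nu]$ rather than $[0,u]$, and translating order-determination between $[0,u]$ and $[0,Nu]$ is not immediate unless you argue via the group-state bijection or restrict to the intrinsic effect-algebra form of the Archimedean condition ($na$ defined for all $n$ forces $a=0$), which follows directly from order-determination without any rescaling. Both issues are resolvable along the lines you indicate, but they are exactly where the published proofs spend their effort, so do not treat them as throwaway remarks.
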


\section{Tests}
\label{sec:Tests}

Boolean algebras are among the simplest examples of effect algebras. Certain
effect algebras can be obtained by gluing several Boolean algebras together.
These algebras become easier to analyze if we understand their constituent
Boolean algebras, and the gluing construction, so we will take a look at this
construction here.

We will frequently use the notion of a test on an effect algebra. An
$n$-\emph{test} on $A$ consists of $n$ elements $a_1, \ldots, a_n$ such that
$a_1 \eaplus \cdots \eaplus a_n$ is defined and equals $1$.
We introduce the following notation for tests:
\[ 
    \tests_n(A) = \{ (a_0, \ldots, a_n) \mid a_0 \eaplus \cdots \eaplus a_n
    = 1 \} 
\]
Note that $\tests_n(A)$ contains the $(n+1)$-tests; this convention will turn
out to be beneficial when defining cohomology of effect algebras. If all but
one elements of a test are known, then the final one is fixed since
orthocomplements in an effect algebra are unique. Therefore $\tests_n(A)$ is
isomorphic to the set
\[
    \{ (a_1, \ldots, a_n) \mid a_1 \eaplus \cdots \eaplus a_n
    \text{ is defined} \}.
\]

An \emph{orthoalgebra} is an effect algebra in which $a \eaplus a$ is never
defined, unless $a = 0$. All information contained in a finite orthoalgebra
can be conveniently organized into a \emph{Greechie diagram}.  Our description
of Greechie diagrams follows \cite{Holzer07}. More background on the topic can
be found in \cite{Kalmbach83,SvozilTkadlec96}. We need a generalization of the
notion of a graph, called a hypergraph. A graph consists of points and a set
of two-element subsets of the points, representing the edges. A hypergraph
generalizes this by dropping the requirement that the subsets have two points.

\begin{definition}
    A \emph{hypergraph} comprises a set $P$ of points and a
    set $H \subseteq \Pow(P)$, elements of which are called
    \emph{hyperedges} or \emph{lines}, such that
    $\bigcup H = P$ and $\varnothing \notin H$.
\end{definition}

Each hypergraph can be represented pictorially. To do this, we simply
draw a point for each point in the hypergraph. A hyperedge is drawn as a
smooth curve connecting all points in the corresponding hyperedge. For
example, consider the following two diagrams.
\begin{center}
    \begin{tikzpicture}
        \def\distance {5cm}

        \draw [fill=black] (90:4) circle (0.1);
        \draw [fill=black] (79:4) circle (0.1);
        \draw [fill=black] (67:4) circle (0.1);
        \draw [fill=black] (56:4) circle (0.1);
        \draw [fill=black] (45:4) circle (0.1);
        \draw (45:4) arc [start angle=45, end angle=90, radius=4];

        \pgftransformxshift{\distance}

        \draw [fill=black] (0,4) circle (0.1);
        \draw [fill=black] (0.7,4) circle (0.1);
        \draw [fill=black] (1.4,4) circle (0.1);
        \draw [fill=black] (2.1,3.4) circle (0.1);
        \draw [fill=black] (2.8,2.8) circle (0.1);
        \draw (0,4) -- (1.4,4) -- (2.8,2.8);
    \end{tikzpicture}
\end{center}
The diagram on the left represents a hypergraph with 5 points and one single
hyperedge containing all of those points. The diagram on the right represents
a hypergraph with 5 points and two hyperedges of 3 points, because it has a
corner.

To any orthoalgebra $A$ we can assign a hypergraph, called its Greechie
diagram.  A non-zero element $a$ of an effect algebra
is called an \emph{atom} if the only element lying below $a$ is
$0$.  A test on $A$ consisting of only atoms is called a \emph{maximal
    test}, since it has no refinements without zeroes. The
Greechie diagram of an orthoalgebra $A$ is a hypergraph with a point for each
atom, and a hyperedge for each maximal test.

\begin{example}
    \label{ex:firefly}
    The Greechie diagram
    \begin{center}
        \begin{tikzpicture}
            \draw [fill=black] (0,0) circle (0.1);
            \draw [fill=black] (-1,0) circle (0.1);
            \draw [fill=black] (-2,0) circle (0.1);
            \draw [fill=black] (0,1) circle (0.1);
            \draw [fill=black] (0,2) circle (0.1);

            \node at (-2,-0.4) {$a$};
            \node at (-1,-0.4) {$b$};
            \node at (0.3,-0.4) {$e$};
            \node at (0.3,2) {$c$};
            \node at (0.3,1) {$d$};

            \draw (-2,0) -- (0,0);
            \draw (0,2) -- (0,0);
        \end{tikzpicture}
    \end{center}
    represents an orthoalgebra with 5 atoms $a$, $b$, $c$, $d$, $e$, in such a way that
    $\{ a,b,e \}$ and $\{ c,d,e \}$ are maximal tests. This means that $ a
    \eaplus b \eaplus e = 1$, $c \eaplus d \eaplus e = 1$, and that the sum of an
    atom in $\{ a,b \}$ and an atom in $\{ c,d \}$ is undefined. The condition $a
    \eaplus b \eaplus e = c \eaplus d \eaplus e$ implies that $a \eaplus b =
    c \eaplus d$. Thus the orthoalgebra consists of 12 elements $0$, $a$, $b$, $c$, $d,
    e$, $a \eaplus b = c \eaplus d$, $a \eaplus e$, $b \eaplus e$, $c \eaplus e$, $d
    \eaplus e$, $1$, with partial addition determined by the maximal tests.

    Note that the Greechie diagram is more concise than a description of
    the full orthoalgebra. This is the reason why finite orthoalgebras are
    often defined in terms of their Greechie diagrams.
\end{example}

The construction of an orthoalgebra from a Greechie diagram is made more
precise using the framework of test spaces, see for example
\cite{FoulisR72, FoulisB93}. It can also be interpreted as pasting
Boolean subalgebras
together, as discussed in \cite{HamhalterNP95, Navara00}.
Each maximal test in an orthoalgebra generates a maximal Boolean subalgebra,
elements of which are sums of its atoms. A maximal Boolean subalgebra is
called a \emph{block}. Conversely, the atoms of any block form a
maximal test.  Thus there is a one-to-one correspondence between blocks and
maximal tests in any finite orthoalgebra. Since each finite orthoalgebra is
completely determined by its atoms and maximal tests, it is the union of its
blocks.

When gluing blocks in an orthoalgebra together, it is often desirable to know
how tests on a union relate to tests on the constituents. The following result
gives such a relation.

\begin{proposition}
    Let $A$ and $B$ be subalgebras of an effect algebra $E$, such that $E = A
    \cup B$. Any test on $E$ is a test on $A$ or a test on $B$.
    \label{prop:TestUnion}
\end{proposition}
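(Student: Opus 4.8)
The plan is to argue by contradiction: I will show that a test on $E$ cannot simultaneously contain an element of $A \setminus B$ and an element of $B \setminus A$. Once this is established, every element of the test avoids at least one of these two sets; since every element lies in $A \cup B$, it follows that either all elements lie in $B$ (a test on $B$) or all lie in $A$ (a test on $A$), and the membership of all elements in a subalgebra makes the tuple a test there, since subalgebras are closed under the defined partial sums.

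Before the main argument I would record the relevant closure properties of subalgebras. Since the inclusion of a subalgebra is a strong morphism, a subalgebra contains $0$ and $1$ and is closed under $(-)^\bot$ and under defined sums $\eaplus$. From these I would derive a third property: a subalgebra is closed under partial differences. Concretely, if $x \leq y$ with $x,y$ both in a subalgebra $S$, then the unique element $z$ with $x \eaplus z = y$, which I denote $y \ominus x$, lies in $S$, because $y \ominus x = (x \eaplus y^\bot)^\bot$. Here $x \eaplus y^\bot$ is defined (writing $y = x \eaplus z$ gives $x \eaplus z \eaplus y^\bot = y \eaplus y^\bot = 1$, so associativity and commutativity force $x \eaplus y^\bot$ to be defined) and lies in $S$ since $x, y^\bot \in S$, whence its orthocomplement lies in $S$ as well. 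This identity is the technical crux.

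For the main step, suppose a test $(a_0, \ldots, a_n)$ on $E$ contains $a_i \in B \setminus A$ and $a_j \in A \setminus B$. By associativity and commutativity every partial sum of the test is defined; in particular $c := a_i \eaplus a_j$ is defined and equals some element of $E = A \cup B$, with $a_i, a_j \leq c$. If $c \in A$, then since $a_j, c \in A$ the difference-closure property gives $a_i = c \ominus a_j \in A$, contradicting $a_i \notin A$. If instead $c \in B$, then symmetrically $a_j = c \ominus a_i \in B$, contradicting $a_j \notin B$. Either way we reach a contradiction, completing the argument.

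I expect the main obstacle to be the difference-closure lemma, since it is the only place where one must go beyond the defining closure conditions of a subalgebra; the accompanying bookkeeping—checking that the partial sums $a_i \eaplus a_j$ and $x \eaplus y^\bot$ are genuinely defined—relies on repeated use of the associativity and commutativity axioms and should be carried out carefully, but presents no conceptual difficulty.
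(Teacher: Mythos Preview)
Your proposal is correct and follows essentially the same route as the paper: both proofs argue by contradiction, pick $a_i\in B\setminus A$ and $a_j\in A\setminus B$, form $c=a_i\eaplus a_j$, and then use the identity $c\ominus a_j=(a_j\eaplus c^\bot)^\bot$ to push $a_i$ into whichever subalgebra contains $c$. The only difference is cosmetic: you isolate this identity as a ``difference-closure'' lemma for subalgebras, whereas the paper carries out the same computation inline (writing $a=c^\bot$ and observing $t_i=(t_j\eaplus a)^\bot$).
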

\begin{proof}
    Suppose that $(t_0, \ldots, t_n)$ is a test on $E$. Assume towards a
    contradiction that it is neither a test on $A$, nor a test on $B$.  Then
    there are $i$ and $j$ such that $t_i \notin A$ and $t_j \notin B$.  Since
    $(t_0, \ldots, t_n)$ is a test on the union, we have $t_i \in B \setminus
    A$ and $t_j \in A \setminus B$, and $t_i \eaplus t_j$ is defined in $E = A
    \cup B$.  Without loss of generality, assume that $t_i \eaplus t_j \in A$.
    Then $t_i \eaplus t_j \eaplus a = 1$ for some $a \in A$, so $t_i$ is the
    orthocomplement of $t_j \eaplus a$. The sum $t_j \eaplus a$ is defined in
    $E$, and both $t_j$ and $a$ lie in $A$. Since $A$ is a subalgebra of $E$,
    the sum $t_j \eaplus a$ is also defined in $A$.  Therefore $t_i = (t_j
    \eaplus a)^\bot$ also lies in $A$, which is a contradiction.
\end{proof}

We will now take a look at the structure of the collection of all tests on an
effect algebra. In particular, we will study the connection between tests and
cyclic sets.

Effect algebras are formally similar to abstract circles, introduced in
\cite{Moerdijk96}, which are an abstract generalization of the unit circle
$\circlegroup$. The following definition is easily seen to be equivalent to
the one given in \cite{Moerdijk96}.

\begin{definition}
    \label{DefAbstractCircle}
    An \emph{abstract circle} consists of a non-empty set $P$ of points, and
    for each two points $x,y$ a set $\Hom(x,y)$ of segments from $x$ to $y$.
    Furthermore, there are partial functions $\cup : \Hom(x,y) \times
    \Hom(y,z) \rightharpoonup \Hom(x,z)$, functions $(-)^\bot : \Hom(x,y) \to
    \Hom(y,x)$, and segments $0_x, 1_x \in \Hom(x,x)$ for each $x$. These are
    subject to the following requirements:
    \begin{itemize}
        \item Associativity: if $a \cup b$ and $(a \cup b) \cup c$ are
            defined, then so are $b \cup c$ and $a \cup (b \cup c)$, and $(a
            \cup b) \cup c = a \cup (b \cup c)$.
        \item Zero: for each $a\in \Hom(x,y)$, $0_x \cup a = a = a \cup 0_y$.
        \item Orthocomplement: for all $a \in \Hom(x,y)$ and $b \in \Hom(y,x)$,
            we have
            \[ a \cup b = 1_x \Longleftrightarrow a = b^\bot
            \Longleftrightarrow b = a^\bot \]
        \item Zero-one law: for any $a \in \Hom(x,y)$, if $a \cup 1_y$ is
            defined, then $a = 0_y$. Also, if $1_x \cup a$ is defined, then $a
            = 0_x$.
        \item Totality: for all $a \in \Hom(x,y)$ and $b \in \Hom(y,z)$, at
            least one of $a \cup b$ and $b^\bot \cup a^\bot$ exists.
        \item Trivial automorphisms: $\Hom(x,x) = \{0_x, 1_x\}$.
    \end{itemize}

    A morphism $F$ from an abstract circle $P$ to $Q$ consists of a function
    $P \to Q$ and functions $\Hom(x,y) \to \Hom(F(x),F(y))$, such that $F$
    preserves $0_x$, $1_x$, and the complement, and is subject to the
    following functoriality condition: whenever $a \cup b$ is defined, then
    also $F(a) \cup F(b)$ is defined, and $F(a \cup b) = F(a) \cup F(b)$.
\end{definition}

An important example is any subset of the unit circle $\circlegroup$.
For $x\neq y$, the set $\Hom(x,y)$ is a singleton, whose element represents
the circle segment from $x$ to $y$, counterclockwise. The homset $\Hom(x,x)$
has two elements $0_x$ and $1_x$, where $0_x$ represents the segment
consisting of the single point $x$, and $1_x$ represents a full circle.  The
composition of the segment from $x$ to $y$ and the segment from $y$ to $z$ is
given by gluing the segments, which is defined whenever the segments together
do not exceed the circle.

There is a common generalization of effect algebras and abstract circles
called \emph{effect algebroids}. An effect algebroid is defined in the same
way as an abstract circle, but without the conditions on Totality and Trivial
Automorphisms. Effect algebras and abstract circles are both extreme cases of
effect algebroids. An effect algebra is an effect algebroid with one point,
where the binary operation is additionally commutative. An abstract circle is
an effect algebroid with multiple points, but only one segment between any
two different points, and only two segments from a point to itself. Effect
algebroids give much insight in the connections between effect algebras
and abstract circles, but we will mainly work with ordinary effect algebras.
For a description of the theory from the more general viewpoint of effect
algebroids, see \cite{Roumen16}.

Abstract circles are closely connected to cyclic sets. To define these, first
let $\Cycl$ be a skeleton of the category of \emph{finite} abstract circles.
The unique abstract circle with $n$ points will be denoted by $\cycle{n}$.
The category $\Cycl$ is called the \emph{category of cycles}. A cyclic set is
defined to be a presheaf on $\Cycl$. The value of a presheaf $X : \Cycl\op \to
\Sets$ on the abstract circle $\cycle{n}$ will be written as $X_{n-1}$. This
is to ensure that indexing of the cyclic set starts at zero, because abstract
circles are required to be non-empty.

The definition of a cyclic set is often given in a more combinatorial way,
using generators and relations. We will sketch the description here, see e.g.\ 
\cite{Jones87} for more details. The category $\Cycl$ is generated by
morphisms of the following forms:
\begin{itemize}
    \item Face maps $\delta_i : \cycle{n-1} \to \cycle{n}$ for $i=0,\ldots,
        n-1$, where $\delta_i$ is the injection that skips the
        $i^{\mathrm{th}}$ point in $\cycle{n}$.
    \item Degeneracy maps $\sigma_i : \cycle{n+1} \to \cycle{n}$ for
        $i=0,\ldots, n-1$, where $\sigma_i$ is the surjection that hits the
        $i^{\mathrm{th}}$ point twice.
    \item Cyclic permutations $\tau : \cycle{n} \to \cycle{n}$ that map each
        point to the next point counterclockwise.
\end{itemize}
These morphisms are subject to certain relations. A cyclic set is then a
sequence of sets $(X_n)_{n \in \mathbb{N}}$ equipped with maps $d_i : X_n \to
X_{n-1}$ for $i=0,\ldots, n$, maps $s_i : X_{n-1} \to X_n$ for $i=0, \ldots,
n$, and maps $t: X_n \to X_n$ satisfying the duals of these relations.

The Yoneda embedding is a morphism from the category of finite abstract
circles to the category of cyclic sets. If $P$ is a finite abstract circle,
then $\Yoneda(P)_n$ consists of sequences of $n+1$ segments in $P$ that sum to
1. Because of the similarity between effect algebras and abstract circles, we
can also define an embedding $\EA \to \cSets$. In an effect algebra, a
sequence of $n$ elements that sums to 1 is called an $n$-test, as discussed 
above. Hence the effect algebraic analogue of $\Yoneda(P)_n$ is the set of
$(n+1)$-tests $\tests_n(A)$.

The family of sets $(\tests_n(A))_{n \in \mathbb{N}}$ forms a cyclic set.
The face maps of $\tests(A)$ are given by adding adjacent elements in a test: 
\begin{IEEEeqnarray*}{r.rCl+l}
    d_i : & \tests_n(A) & \to & \tests_{n-1}(A) & \\
          & (a_0, \ldots, a_n) & \mapsto &
            (a_0, \ldots, a_i \eaplus a_{i+1}, \ldots, a_n)
          & \text{for } 0\leq i \leq n-1 \\
          & (a_0, \ldots, a_n) & \mapsto &
            (a_n \eaplus a_0, \ldots, a_{n-1})
          & \text{for } i=n
\end{IEEEeqnarray*}
Degeneracies are given by inserting zeroes:
\begin{IEEEeqnarray*}{r.rCl+l}
    s_i : & \tests_{n-1}(A) & \to & \tests_n(A) & \\
          & (a_0, \ldots, a_{n-1}) & \mapsto &
            (a_0, \ldots, a_{i-1}, 0, a_i, \ldots, a_{n-1}) 
          & \text{for } 0 \leq i \leq n-1 \\
          & (a_0, \ldots, a_{n-1}) & \mapsto &
            (a_0, \ldots, a_{n-1}, 0)
          & \text{for } i=n
\end{IEEEeqnarray*}
Finally, cyclic permutations are defined by
\[ \lambda : \tests_n(A) \to \tests_n(A), \quad (a_0, \ldots, a_n) \mapsto
    (a_n, a_0, \ldots, a_{n-1}) \]

\section{Cyclic cohomology of an effect algebra}
\label{sec:CyclicCohomologyEffectAlgebra}

Effect algebras embed in cyclic sets,
and cyclic sets admit a natural cohomology theory called \emph{cyclic cohomology}.
Therefore it is reasonable to use cyclic cohomology also for effect algebras.
Cyclic cohomology was introduced by Connes in \cite{Connes83,Connes85}, see
also \cite{Loday84}. The book \cite{Loday98} contains an overview of the
theory.

The cohomology groups arising from a cyclic set are defined from a cochain
complex associated to the cyclic set. We will now describe this construction
for the cyclic set of tests $\tests(A)$. We will take coefficients in the
field $\mathbb{R}$, since some of our results only hold over this field of
coefficients. There are two versions of the definition of cyclic cohomology:
Connes' version from \cite{Connes83} is simpler, but only valid over fields
containing the rational numbers. Tsygan's version from \cite{Tsygan83} uses a
double complex and is more complicated, but also more general. Since we will
only be concerned with coefficients in $\mathbb{R}$, we will work with Connes'
definition.

Let $C^{\bullet}(A)$ be the complex
\[ \mathbb{R}^{\tests_0(A)} \overset{\delta^0}{\longrightarrow}
    \mathbb{R}^{\tests_1(A)} \overset{\delta^1}{\longrightarrow} \cdots \]
Elements of $\mathbb{R}^{\tests_n(A)}$ are functions from the $(n+1)$-tests to
$\mathbb{R}$ and are called $n$-\emph{cocycles}.
Observe that every effect algebra has exactly one 1-test, so
$\mathbb{R}^{\tests_0(A)}$ can be identified with $\mathbb{R}$.
Also, in a 2-test, each entry determines the other one via complementation, so
$\mathbb{R}^{\tests_1(A)}$ can be identified with $\mathbb{R}^{A}$. The
boundary maps are given by an alternating sum over the face maps.
Explicitly,
\[
    \delta^n(\alpha)(a_0, a_1, \ldots, a_n, a_{n+1}) =
    \sum_{i=0}^{n} (-1)^i \alpha(a_0, \ldots, a_i \eaplus a_{i+1},
    \ldots, a_{n+1}) 
    + (-1)^{n+1} \alpha(a_{n+1} \eaplus a_0, a_1, \ldots, a_{n}).
\]

We wish to consider only cocycles that are invariant under the action of
$\lambda$ defined above. In other words, take a subcomplex of $C^{\bullet}(A)$
consisting of those cocycles $\alpha$ for which
\[ \alpha(a_0, a_1, \ldots, a_n) = (-1)^{n} \alpha(a_n, a_0, \ldots,
    a_{n-1}). \]
The boundary maps $\delta^n$ send invariant cocycles to invariant cocycles, so this
indeed gives a well-defined subcomplex, denoted $C_{\lambda}^{\bullet}(A)$. The
cyclic cohomology of the effect algebra $A$ is the cohomology of
$C_{\lambda}^{\bullet}(A)$, that is,
$\HC^n(A) = \ker(\delta^{n}) / \im(\delta^{n-1})$.

Sometimes we will also be interested in the cohomology of the complex
$C^{\bullet}(A)$ itself, i.e.\ without taking the subcomplex of invariant
cocycles. The cohomology of $C^{\bullet}(A)$ is called the \emph{Hochschild
    cohomology} of $A$ and denoted $\HH^n(A)$. We will see that the Hochschild
cohomology of an effect algebra is not as well-behaved as its cyclic
cohomology. However, there are useful relations between Hochschild
cohomology and cyclic cohomology, for instance Connes' exact sequence
connecting the two. Therefore computing Hochschild cohomology is sometimes a
practical intermediate step for computing cyclic cohomology. 

\begin{example}
    \label{ex:CohomologyTwoElementAlgebra}
    We will determine the cohomology groups of the effect algebra $L_1 = \{
    0,1 \}$ via a direct computation. The $n$-tests on $L_1$ have a 1 at
    exactly one position, and are zero at all other positions. If $\alpha \in
    C_{\lambda}^n(L_1)$, then $\alpha$ is determined by its value on the test
    $(1,0,\ldots,0)$ by invariance. Hence each $C_{\lambda}^n(L_1)$ is a
    one-dimensional vector space.

    If $n$ is even, then $(\delta^n \alpha)(a_0, \ldots, a_{n+1})$ is an
    alternating sum with $n + 2$ terms. By invariance, all terms in the sum
    are equal, so because the sum is alternating and has an even number of
    terms, it is zero. Hence we have $\delta^n = 0$ for even $n$, and similarly
    $\delta^n$ is non-zero for odd $n$. Thus
    \[ \ker(\delta^n) =  
        \begin{cases}
            \mathbb{R} & \text{if $n$ is even} \\
            0 & \text{if $n$ is odd}
        \end{cases}
    \]
    and
    \[ \im(\delta^n) = 
        \begin{cases}
            0 & \text{if $n$ is even} \\
            \mathbb{R} & \text{if $n$ is odd.}
        \end{cases}
    \]
    Therefore
    \[ \HC^n(L_1) = 
        \begin{cases}
            \mathbb{R} & \text{if $n=0$} \\
            0 & \text{if $n>0$.}
        \end{cases}
    \]
\end{example}

We will look at the cohomology groups of an effect algebra $A$ in low degrees.
For $n=0$, the definition reduces to $\HC^0(A) = \ker(\delta^0 : \mathbb{R}
\to \mathbb{R}^A)$, since there is only one 1-test, and 2-tests correspond to
elements of $A$.  The boundary map $\delta^0$ satisfies $\delta^0(\alpha)(a) =
\alpha(a \eaplus a^\bot) - \alpha(a^\bot \eaplus a) = 0 $, hence $\HC^0(A)$
is always the ground field $\mathbb{R}$.

We continue with the first cohomology group $\HC^1(A)$. 
We will first rewrite the definition of $\HC^1$. Since the boundary map
$\delta^0$ is zero, $\HC^1(A)$ reduces to the kernel of $\delta^1$. We identify
1-cocycles $\alpha$ with maps from $A$ to $\mathbb{R}$, by letting a 2-test
$(a,b)$ correspond to the element $b\in A$. Invariance under the cyclic
permutation map $\lambda$ then means that $\alpha(a^{\bot}) =
-\alpha(a)$, and $\alpha \in \ker(\delta^1)$ means that $\alpha(b) -
\alpha(a \eaplus b) + \alpha(a) = 0$ whenever $a\eaplus b$ is defined.
Therefore
\[ \HC^1(A) \cong \{ \alpha : A \to \mathbb{R} \mid \alpha(a \eaplus b) =
    \alpha(a) + \alpha(b),\ \alpha(a^{\bot})= -\alpha(a) \}. \]

Recall that the state space of $A$ is the convex space of morphisms $\sigma: A
\to [0,1]$. These satisfy $\sigma(a \eaplus b) = \sigma(a) + \sigma(b)$ and
$\sigma(a^\bot) = 1 - \sigma(a)$. With the definition of the first cohomology
group written as above, we see that the state space is similar to the first
cohomology group. We will make the connection more precise. The state space of
an effect algebra is always a compact convex space, and hence it embeds in a
vector space over $\mathbb{R}$. We would like to prove that $\HC^1(A)$ (with
coefficients in $\mathbb{R}$) is the \emph{smallest} vector space that
contains a copy of $\St(A)$. 
This means that there exists an affine injection $i : \St(A)
\hookrightarrow \HC^1(A)$, such that for all affine injections $j : \St(A)
\hookrightarrow V$ into a vector space there exists a unique affine
injection $\varphi : \HC^1(A) \hookrightarrow V$ that makes the triangle
\begin{center}
    \begin{tikzpicture}
        \matrix [commutative diagram] {
            \node (St) {$\St(A)$}; &
            \node (Hc) {$\HC^1(A)$}; \\
            & \node (V) {$V$}; \\
        };
        \path [right hook->] (St) edge node [mor,above] {i} (Hc)
            edge node [mor,below left] {j} (V);
        \path [dashed, right hook->] (Hc) edge node [mor,right] {\varphi} (V);
    \end{tikzpicture}
\end{center}
commute.  Note that $\varphi$ is a map between vector spaces, but it is
usually not linear. We can only obtain an affine map between the vector
spaces.

Unfortunately this result does not hold for all effect algebras, for
instance it fails for the effect algebra of projections on a Hilbert space.
However, the result holds for many classes of well-behaved effect algebras. We
will first present a general result that provides a sufficient condition on $A$
that makes the statement true. This sufficient condition is hard to prove in
practice, so after proving the general result we will mention a large
class of effect algebras that satisfy the condition.

\begin{definition}
    A map $\varphi$ from an effect algebra $A$ into $\mathbb{R}$ is
    \emph{additive} if $\varphi(a \eaplus b) = \varphi(a) + \varphi(b)$
    whenever $a \eaplus b$ is defined.
\end{definition}

\begin{theorem}
    \label{thm:FirstCohomologyGroupStateSpace}
    Let $A$ be an effect algebra whose state space is non-empty. Suppose that
    every additive map $\alpha : A \to \mathbb{R}$ can be written as a
    difference of two positive additive maps $\alpha = \alpha_1 -
    \alpha_2$, where $\alpha_1, \alpha_2 : A \to \mathbb{R}_{\geq 0}$. Then
    $\HC^1(A)$ is the smallest vector space that contains a copy of the state
    space $\St(A)$.
\end{theorem}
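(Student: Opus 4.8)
The plan is to work inside the ambient real vector space $V_{\mathrm{add}}$ of all additive maps $A \to \mathbb{R}$, in which both $\St(A)$ and $\HC^1(A)$ sit concretely. Using the rewriting of $\HC^1(A)$ established above, an additive map $\alpha$ satisfies $\alpha(a^\bot) = -\alpha(a)$ exactly when $\alpha(1) = 0$, so $\HC^1(A) = \{\alpha \in V_{\mathrm{add}} \mid \alpha(1)=0\}$ is the kernel of the linear evaluation functional $\mathrm{ev}_1 : V_{\mathrm{add}} \to \mathbb{R}$. States are the non-negative additive maps with $\mathrm{ev}_1(\sigma) = 1$, and since non-negativity is stable under convex combinations, $\St(A)$ is a convex subset of the affine hyperplane $\mathrm{ev}_1^{-1}(1)$. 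Fixing a state $\sigma_0$ (this is where the non-emptiness hypothesis enters), I would define $i : \St(A) \to \HC^1(A)$ by $i(\sigma) = \sigma - \sigma_0$; this lands in $\HC^1(A)$, is affine, and is injective.

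First I would show that $i(\St(A))$ spans $\HC^1(A)$ affinely, equivalently that $\{\sigma - \sigma_0\}$ spans it linearly, since $0 = i(\sigma_0)$ lies in the image. This is where the decomposition hypothesis does its work: given $\alpha \in \HC^1(A)$, write $\alpha = \alpha_1 - \alpha_2$ with $\alpha_1,\alpha_2$ positive additive. Evaluating at $1$ and using $\alpha(1)=0$ forces $\alpha_1(1) = \alpha_2(1) = s$; when $s>0$ the normalizations $\alpha_i/s$ are states (a positive additive map with value $1$ at $1$ automatically lands in $[0,1]$ and respects complements), and $\alpha = s(\alpha_1/s - \sigma_0) - s(\alpha_2/s - \sigma_0)$ exhibits $\alpha$ in the span; the case $s=0$ forces $\alpha_1=\alpha_2=0$ because a positive additive map vanishing at $1$ vanishes identically. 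Spanning immediately gives uniqueness of any factoring $\varphi$, since two affine maps agreeing on an affinely spanning set coincide.

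For existence, given an affine injection $j : \St(A) \to V$, I would set $\varphi(\alpha) = j(\sigma_0) + \sum_k c_k\,(j(\sigma_k) - j(\sigma_0))$ for any presentation $\alpha = \sum_k c_k(\sigma_k - \sigma_0)$, so that $\varphi \circ i = j$ by construction and $\varphi$ is affine once well defined. The crux, and the step I expect to be the main obstacle, is well-definedness: showing that $j$ sends every linear relation $\sum_k c_k(\sigma_k - \sigma_0) = 0$ to the corresponding relation among the $j(\sigma_k)$, i.e.\ that the affine geometry of the state space matches the linear geometry of $\HC^1(A)$. The key observation unlocking this is that evaluating such a relation at $1$ forces $\sum_k c_k = 0$, because every state takes value $1$ at $1$. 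Splitting the indices by the sign of $c_k$ and dividing by the common mass $P = \sum_{c_k>0} c_k = \sum_{c_k<0}|c_k|$ rewrites the relation as an \emph{equality of two convex combinations of states}; since $j$ preserves convex combinations, applying it to both sides yields exactly the desired image relation. Hence $\varphi$ is well defined and affine.

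Finally, injectivity of $\varphi$ runs the same reduction in reverse: splitting a relation $\sum_k c_k(j(\sigma_k)-j(\sigma_0)) = 0$ in $V$ by the sign of $c_k$ and normalizing presents the $j$-image of one state as equal, in $V$, to the $j$-image of a second state (a convex combination of the remaining $\sigma_k$ together with $\sigma_0$); since $j$ is injective on $\St(A)$ these two states coincide, and undoing the normalization returns $\sum_k c_k(\sigma_k - \sigma_0)=0$, so $\alpha = 0$. Together with the spanning of the second step, this shows $\varphi$ is the unique affine injection with $\varphi \circ i = j$, which is precisely the statement that $\HC^1(A)$ is the smallest vector space containing a copy of $\St(A)$.
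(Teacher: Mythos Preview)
Your strategy is essentially the paper's: normalize linear relations among states to equalities of convex combinations and then push them through the affine injection $j$. The organization differs slightly---the paper defines $\varphi(\alpha)$ directly from a decomposition $\alpha+\sigma_0=\alpha_1-\alpha_2$ and checks independence of the decomposition, whereas you first prove that $i(\St(A))$ spans $\HC^1(A)$ and then define $\varphi$ via an arbitrary presentation---but the core manoeuvre is the same.

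There is, however, a genuine slip in your well-definedness step. You claim that evaluating the relation $\sum_k c_k(\sigma_k-\sigma_0)=0$ at $1$ forces $\sum_k c_k=0$. It does not: each $\sigma_k-\sigma_0$ already vanishes at $1$, so evaluation at $1$ yields the tautology $0=0$ and imposes no constraint on the $c_k$. Consequently your splitting $P=\sum_{c_k>0}c_k=\sum_{c_k<0}|c_k|$ is not justified as stated. The repair is the one you implicitly use in your injectivity argument: rewrite $\sum_k c_k(\sigma_k-\sigma_0)=0$ as $\sum_{c_k>0}c_k\sigma_k + N\sigma_0 = \sum_{c_k<0}|c_k|\sigma_k + P\sigma_0$ with $P=\sum_{c_k>0}c_k$ and $N=\sum_{c_k<0}|c_k|$; both sides now carry total mass $P+N$, so dividing by $P+N$ gives an equality of genuine convex combinations of states, to which $j$ may be applied. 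The same trick, run with $j$ first and injectivity second, is exactly what your final paragraph does. Once this is made explicit in the well-definedness step as well, your proof is complete and matches the paper's in substance. (A minor stylistic difference: the paper decomposes $\alpha+\sigma_0$ rather than $\alpha$, which guarantees $\alpha_1(1)\geq 1>0$ and avoids the separate $s=0$ case you handle.)
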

\begin{proof}
    Fix a state $\sigma_0$ and use this to define an embedding $i : \St(A) \to
    \HC^1(A)$ by $i(\sigma) = \sigma - \sigma_0$. Then $i(\sigma)$ is linear
    because $\sigma$ and $\sigma_0$ are, and $i(\sigma)$ satisfies
    \[ i(\sigma)(a^{\bot}) = \sigma(a^{\bot}) - \sigma_0(a^{\bot}) = 1 -
        \sigma(a) - (1 - \sigma_0(a)) = - \sigma(a) + \sigma_0(a) =
        -i(\sigma)(a). \]
    Thus $i$ maps states to cocycles in $\HC^1(A)$, and $i$ is clearly
    injective and affine.

    Let $j: \St(A) \to V$ be an arbitrary affine injection. To define a map
    $\varphi : \HC^1(A) \to V$, take any $\alpha \in \HC^1(A)$. Since $\alpha$
    is additive and $\sigma_0$ is a state, $\alpha + \sigma_0$ is also
    additive. Using the hypothesis, express $\alpha + \sigma_0$ as a
    difference $\alpha + \sigma_0 = \alpha_1 - \alpha_2$, where $\alpha_1$ and
    $\alpha_2$ are positive additive maps. 
    To define $\varphi(\alpha)$, we distinguish several cases.
    \begin{itemize}
        \item Suppose that $\alpha_1(1)$ and $\alpha_2(1)$ are both non-zero.
            Define $\sigma_i(a) = \frac{\alpha_i(a)}{\alpha_i(1)}$ for $i =
            1,2$, which is a state.
            Then define $\varphi$ via $\varphi(\alpha) = \alpha_1(1)
            j(\sigma_1) - \alpha_2(1) j(\sigma_2)$.
        \item If $\alpha_1(1) = 0$ and $\alpha_2(1)$ is non-zero, define
            $\sigma_2(a) = \frac{\alpha_2(a)}{\alpha_2(1)}$ and put
            $\varphi(\alpha) = -\alpha_2(1) j(\sigma_2)$.
        \item Similarly, if $\alpha_1(1) \neq 0 $ and $\alpha_2(1) = 0$,
            define $\sigma_1(a) = \frac{\alpha_1(a)}{\alpha_1(1)}$ and put
            $\varphi(\alpha) = \alpha_1(1) j(\sigma_1)$.
        \item Finally, if $\alpha_1(1) = \alpha_2(1) = 0$, then let
            $\varphi(\alpha) = 0$.
    \end{itemize}

    The decomposition of $\alpha + \sigma_0$ need not be unique, so we have to
    prove that $\varphi$ is well-defined by showing that it does not depend
    on the choice of decomposition.
    Suppose that $\alpha + \sigma_0 = \alpha_1 - \alpha_2 = \alpha'_1 -
    \alpha'_2$. We will assume that all of $\alpha_1(1)$, $\alpha_2(1)$,
    $\alpha'_1(1)$ and $\alpha'_2(1)$ are non-zero; the other cases are
    easier.
    We have to prove that $\alpha_1(1) j(\sigma_1) - \alpha_2(1) j(\sigma_2) =
    \alpha'_1(1) j(\sigma'_1) - \alpha'_2(1) j(\sigma'_2)$. For this we use
    that $j$ preserves convex combinations, and that linear combinations can
    be made convex by normalization:
    \[ \frac{\alpha_1(1)}{\alpha_1(1) + \alpha'_2(1)} j(\sigma_1) +
        \frac{\alpha'_2(1)}{\alpha_1(1) + \alpha'_2(1)} j(\sigma'_2)
        = j \left( \frac{\alpha_1(1)\sigma_1 + \alpha'_2(1)\sigma'_2}
            {\alpha_1(1) + \alpha'_2(1)} \right) \\
        = j \left( \frac{\alpha_1 + \alpha'_2}{\alpha_1(1) +
                \alpha'_2(1)} \right) \]
    Now using $\alpha_1 + \alpha'_2 = \alpha'_1 + \alpha_2$ and rewriting back
    shows that this equals
    \[ \frac{\alpha'_1(1)}{\alpha'_1(1) + \alpha_2(1)} j(\sigma'_1) +
        \frac{\alpha_2(1)}{\alpha'_1(1) + \alpha_2(1)} j(\sigma_2), \]
    so $\alpha_1(1) j(\sigma_1) - \alpha_2(1)j(\sigma_2) = \alpha'_1(1)
    j(\sigma'_1) - \alpha'_2(1) j(\sigma'_2)$.

    The next step is to show that $\varphi$ makes the triangle commute, i.e.\ 
    $\varphi(i(\sigma)) = j(\sigma)$ for all states $\sigma$. A decomposition
    of $i(\sigma) + \sigma_0$ is just $\sigma - 0$, since $\sigma$ is a state
    and hence positive. Then $\varphi(i(\sigma)) = \sigma(1) j(\sigma) =
    j(\sigma)$, as required.

    It is easy to see that $\varphi$ is affine. To show that it is injective,
    suppose that $\varphi(\alpha) = \varphi(\alpha')$. Then $\alpha_1(1)
    j(\sigma_1) - \alpha_2(1) j(\sigma_2) = \alpha'_1(1) j(\sigma'_1) -
    \alpha'_2(1) j(\sigma'_2)$. By using normalization and affinity of $j$, we
    obtain $j \left( \frac{\alpha_1 + \alpha'_2}{\alpha_1(1) + \alpha'_2(1)}
    \right) = j \left( \frac{\alpha'_1 + \alpha_2}{\alpha'_1(1) +
            \alpha_2(1)} \right)$, and since $j$ is injective this gives
    $\alpha_1 - \alpha_2 = \alpha'_1 - \alpha'_2$. This means $\alpha =
    \alpha'$, proving injectivity of $\varphi$.

    Finally we have to prove that $\varphi$ is the unique morphism with this
    property. Suppose that an affine map $\psi : \HC^1(A) \to V$ satisfies
    $\psi \circ i = j$. Take $\alpha \in \HC^1(A)$ and decompose $\alpha +
    \sigma_0$ as $\alpha_1 - \alpha_2$ where both $\alpha_i$ are positive. We
    assume that $\alpha_1(1)$ and $\alpha_2(1)$ are both non-zero; the other
    cases are similar. Define $\sigma_i = \frac{\alpha_i}{\alpha_i(1)}$ as
    before. We have to show that $\psi(\alpha) = \alpha_1(1) j(\sigma_1) -
    \alpha_2(1) j(\sigma_2)$. Since $\psi \circ i = j$, we have $\psi(\sigma -
    \sigma_0) = j(\sigma)$ for all states $\sigma$. Therefore we are done if
    we can establish that $\alpha_1(1) \psi(\sigma_1 - \sigma_0) =
    \psi(\alpha) + \alpha_2(1) \psi(\sigma_2 - \sigma_0)$. We will prove a
    normalized version of this equality, that is,
    \[ \frac{\alpha_1(1)}{1 + \alpha_2(1)} \psi(\sigma_1 - \sigma_0) =
    \frac{1}{1 + \alpha_2(1)} \psi(\alpha) + \frac{\alpha_2(1)}{1 + \alpha_2(1)}
    \psi(\sigma_2 - \sigma_0). \]

    To prove this, first note that
    \[ 1 + \alpha_2(1) = \alpha(1) + \sigma_0(1) + \alpha_2(1) = \alpha_1(1), \]
    where we used that $\sigma_0$ is a state and that $\alpha(1) = -\alpha(0)
    = 0$ because $\alpha$ lies in $\HC^1(A)$. Furthermore,
    \begin{IEEEeqnarray*}{rCl}
        \alpha + \alpha_2(1)(\sigma_2 - \sigma_0)
        &=& \alpha + \alpha_2 - \alpha_2(1)\sigma_0 \\
        &=& \alpha_1 - \sigma_0 - \alpha_2(1)\sigma_0 \\
        &=& \alpha_1 - (1 + \alpha_2(1)) \sigma_0 \\
        &=& \alpha_1 - \alpha_1(1) \sigma_0 \\
        &=& \alpha_1(1) (\sigma_1 - \sigma_0).
    \end{IEEEeqnarray*}
    Because $\psi$ preserves convex combinations, it follows that
    \begin{IEEEeqnarray*}{rCl}
        \frac{1}{1 + \alpha_2(1)} \psi(\alpha) + \frac{\alpha_2(1)}{1 + \alpha_2(1)}
        \psi(\sigma_2 - \sigma_0)
        &=& \psi \left( \frac{\alpha + \alpha_2(1) (\sigma_2 - \sigma_0)}{1 +
                \alpha_2(1)} \right) \\
        &=& \psi \left( \frac{ \alpha_1(1) (\sigma_1 - \sigma_0)}{\alpha_1(1)} \right) \\
        &=& \psi(\sigma_1 - \sigma_0) \\
        &=& \frac{\alpha_1}{1 + \alpha_2(1)} \psi(\sigma_1 - \sigma_0).
    \end{IEEEeqnarray*}
    This finishes the proof that $\varphi$ is unique.
\end{proof}

The next result shows that all finite Archimedean interval effect algebras
satisfy the assumption in the previous theorem. By
Theorem~\ref{ThmArchimedeanOrderDetermining}, the state space of any such algebra is
non-empty. Therefore, for all finite Archimedean interval effect algebras $A$,
the first cohomology group $\HC^1(A)$ is the smallest vector space surrounding
$\St(A)$. 

\begin{proposition}
    \label{prop:FiniteEAPositivelyGenerated}
    If $A$ is a finite Archimedean interval effect algebra, then every
    additive map $\alpha : A \to \mathbb{R}$ can be expressed as the
    difference of two positive additive maps.
\end{proposition}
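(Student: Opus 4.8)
The plan is to reduce the statement to the existence of a single \emph{strictly positive} additive map $\rho : A \to \mathbb{R}$, meaning $\rho(a) > 0$ for every $a \neq 0$, and then to absorb the negative part of an arbitrary additive $\alpha$ into a large multiple of $\rho$. Concretely, once such a $\rho$ is available, I would argue as follows. Given any additive $\alpha : A \to \mathbb{R}$, note first that $\alpha(0) = 0$ and that $A$ is finite, so $\alpha$ takes only finitely many values. Hence I can choose a real number $N$ large enough that $\alpha(a) + N\rho(a) \geq 0$ for every $a \in A$: for $a = 0$ both summands vanish, and for $a \neq 0$ strict positivity of $\rho(a)$ lets me take $N \geq -\alpha(a)/\rho(a)$, the maximum over the finite set of nonzero $a$ giving a uniform choice. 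Then $\alpha_1 = \alpha + N\rho$ and $\alpha_2 = N\rho$ are both additive (a sum of additive maps is additive) and non-negative, and $\alpha = \alpha_1 - \alpha_2$ is the required decomposition.

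The heart of the argument is therefore the construction of $\rho$, and this is where the hypotheses enter. Since $A$ is a finite Archimedean interval effect algebra, Theorem~\ref{ThmArchimedeanOrderDetermining} tells me that its state space is order-determining, i.e.\ $\sigma(a) \leq \sigma(b)$ for all states $\sigma$ implies $a \leq b$. I would apply this with $b = 0$. For any nonzero $a$ we have $a \not\leq 0$, since $0 \leq a$ holds in every effect algebra and so $a \leq 0$ would force $a = 0$ by antisymmetry; hence, by the contrapositive of the order-determining property, there must exist a state $\sigma_a$ with $\sigma_a(a) > \sigma_a(0) = 0$.

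Now I would assemble these witnesses into a single map by summing: set $\rho = \sum_{a \in A,\, a \neq 0} \sigma_a$, a finite sum because $A$ is finite. Each $\sigma_a$ is additive and non-negative, so $\rho$ is additive and non-negative as well; moreover, for any $b \neq 0$ the term $\sigma_b(b) > 0$ occurs in the sum while every other term is $\geq 0$, whence $\rho(b) > 0$. Thus $\rho$ is the strictly positive additive map needed above, and the decomposition of the first paragraph completes the proof.

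I expect the main obstacle to be exactly this construction of $\rho$, that is, extracting strict positivity on every nonzero element from the order-determining property. The scaling argument is routine and uses only finiteness of $A$, whereas producing a faithful positive functional is the step at which the Archimedean hypothesis (through Theorem~\ref{ThmArchimedeanOrderDetermining}) is indispensable. One should also be careful that the state space is nonempty so that the witnesses $\sigma_a$ exist; for the trivial algebra $A = \{0\}$ there are no nonzero elements and the statement holds vacuously, so one may assume $A$ is nontrivial, in which case order-determining guarantees enough states.
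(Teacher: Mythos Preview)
Your proof is correct and follows essentially the same strategy as the paper: construct a strictly positive (faithful) additive map, then absorb the negative part of $\alpha$ into a large multiple of it using finiteness of $A$. The only difference is in how the faithful map is built: the paper observes that the state space of a finite $A$ is a polytope with finitely many extreme states $\sigma_1,\dots,\sigma_n$ and takes their average $\beta = \tfrac{1}{n}\sum \sigma_i$, deducing faithfulness from the order-determining property, whereas you pick for each nonzero $a$ a witness state $\sigma_a$ with $\sigma_a(a)>0$ and sum these over the finite set $A\setminus\{0\}$. Your construction is slightly more direct, avoiding any appeal to the polytope structure of the state space; the paper's version has the minor aesthetic advantage that $\beta$ is itself a state rather than just a positive additive map.
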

\begin{proof}
    The following proof is inspired by an analogous result for complemented
    lattices in \cite{Dvurecenskij78}, but modified to be suitable for effect
    algebras.

    Since $A$ is finite, it can be presented by a finite number of generators
    and relations. Let $X$ be a finite set of generators. The state space of
    $A$ consists of maps $X \to [0,1]$ subject to the relations. Therefore the
    state space is a compact convex space generated by a finite number of
    points. Let $\sigma_1, \ldots, \sigma_n$ be generators for the state space
    and define a state $\beta$ by $\beta = \frac{1}{n} \sigma_1 + \cdots +
    \frac{1}{n} \sigma_n$.
    We will show that $\beta$ is a faithful state,
    which means that $\beta(a) \neq 0$ for all $a \neq 0$. Assume that $a \neq
    0 $ but $\beta(a) = 0$. Then $\sigma_i(a) = 0$ for all $i = 1, \ldots, n$.
    But since the state space is generated by the states $\sigma_i$, this
    implies that all states $\sigma$ map $a$ to zero. By
    Theorem~\ref{ThmArchimedeanOrderDetermining}, this is only possible if
    $a=0$, contradicting our assumption that $a \neq 0$.

    We will use the faithful state $\beta$ to prove the proposition. Let
    $\alpha : A \to \mathbb{R}$ be an additive map. We may assume that
    $\alpha(a) < 0$ for some $a \in A$, since otherwise the claim is proven
    immediately. Let  
    \[ K = \frac{-\min \{ \alpha(a) \mid a \in A \}}{\min \{ \beta(a) \mid a
            \neq 0 \}} \in \mathbb{R}. \]
    Both minimums exists since $A$ is finite. The denominator is strictly
    positive, because $\beta$ is a faithful state. Also the numerator is
    strictly positive, since there is an $a \in A$ for which $\alpha(a) < 0$.
    Hence $K > 0$.

    We wish to write $\alpha$ as the difference of two positive additive maps
    $\alpha = \alpha_1 - \alpha_2$. Take $\alpha_2(a) = K \beta(a)$, which is
    positive since $K$ and $\beta$ are positive, and additive since $\beta$ is
    additive. Then let $\alpha_1 = \alpha + \alpha_2$. Clearly $\alpha_1$ is
    additive and $\alpha = \alpha_1 - \alpha_2$, so it is left to check that
    $\alpha_1$ is positive. Take any $b \neq 0$ in $A$. Then $\min \{ \beta(a)
    \mid a \neq 0\} \leq \beta(b)$, and since $\min \{ \alpha(a) \mid a \in A
    \}$ is negative, it follows that 
    \[ K \geq \frac{-\min \{ \alpha(a) \mid a \in A \} }{\beta(b)}. \]
    Therefore $\alpha_1(b) = \alpha(b) + K \beta(b) \geq \alpha(b) -
    \min \{ \alpha(a) \mid a \in A \} \geq 0$, where the last inequality uses
    that $\alpha(a)$ is negative for some $a$. This proves that $\alpha_1$ is
    a positive map, hence $\alpha$ is the difference of two positive maps.
\end{proof}

\section{Relative cohomology}
\label{sec:RelativeCohomology}

We shall define relative cohomology of effect algebras and discuss some
applications.

Let $B$ be an effect algebra and $A \subseteq B$ a subalgebra. Each test on
$A$ is in particular a test on $B$, so the collection of $(n+1)$-tests
$T_n(A)$ on $A$ can be considered as a subset of $T_n(B)$. This gives a
surjection $p^n : \Hom(T_n(B), \mathbb{R}) \to \Hom(T_n(A), \mathbb{R})$ by
restriction:
\[ p^n(\alpha) = \alpha|_{T_n(A)} \]
Since the map $p^n$ is compatible with cyclic permutations, it restricts to a
surjection $C_{\lambda}^n(B) \to C_{\lambda}^n(A)$, also denoted $p^n$ or $p$.

The kernel of $p^n$ consists of all invariant cochains $T_n(B) \to \mathbb{R}$
that are zero on $A$-tests, but not necessarily on $B$-tests. It fits in a
short exact sequence
\[ 0 \longrightarrow \ker(p^n) \longrightarrow C_{\lambda}^n(B)
    \overset{p^n}{\longrightarrow} C_{\lambda}^n(A) \longrightarrow 0. \]
The coboundary maps of the cochain complex $C_{\lambda}^{\bullet}(B)$ restrict to
$\ker(p^n)$, so the above is in fact a short exact sequence of cochain
complexes. The \emph{relative cohomology} of the pair $(B,A)$ is defined to be
the cohomology of $\ker(p^{\bullet})$. By general results from homological
algebra (see e.g.\ \cite{Weibel94}), the short exact sequence above gives rise
to a long exact sequence in cohomology:
\[ \cdots \longrightarrow \HC^{n-1}(A) \longrightarrow \HC^n(B,A)
    \longrightarrow \HC^n(B) \longrightarrow \HC^n(A) \longrightarrow
    \HC^{n+1}(B,A) \longrightarrow \cdots \]

As a first application of relative cohomology, we will show that trivial tests
can be ignored when calculating the cohomology of an effect algebra. A trivial
test is a test $(a_0, \ldots, a_n)$ in which exactly one $a_i$ is one and all
others are zero. To make the statement precise, consider the effect algebra
$L_1 = \{ 0,1 \}$. This can be embedded in any effect algebra $A$, since all
effect algebras have a zero and a one. The relative cohomology of the pair
$(A, L_1)$ is the cohomology of $\ker(p^{n})$, where $p^n : C_{\lambda}^n(A)
\to C_{\lambda}^n(L_1)$ is the restriction map. Since the tests on $L_1$ are
exactly the trivial tests, the kernel of $p^n$ consists of those cocycles that
are zero on trivial tests. Hence the claim that trivial tests can be ignored
in the calculation of cohomology groups amounts to the following.

\begin{proposition}
    \label{prop:CohomologyTrivialTests}
    For any effect algebra $A$ and any $n>0$, $\HC^n(A, L_1) \cong \HC^n(A)$.
\end{proposition}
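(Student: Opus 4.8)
The plan is to feed the known cohomology of $L_1$ into the long exact sequence of the pair $(A, L_1)$ and read off the isomorphisms degree by degree. Specializing the long exact sequence displayed above to the inclusion $L_1 \hookrightarrow A$ gives, around degree $n$,
\[ \cdots \longrightarrow \HC^{n-1}(L_1) \longrightarrow \HC^n(A, L_1) \longrightarrow \HC^n(A) \longrightarrow \HC^n(L_1) \longrightarrow \cdots \]
From Example~\ref{ex:CohomologyTwoElementAlgebra} I would import the values $\HC^0(L_1) = \mathbb{R}$ and $\HC^n(L_1) = 0$ for every $n > 0$, and then split into the cases $n \geq 2$ and $n = 1$.

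For $n \geq 2$ both neighbouring terms $\HC^{n-1}(L_1)$ and $\HC^n(L_1)$ vanish, so the relevant segment collapses to $0 \to \HC^n(A, L_1) \to \HC^n(A) \to 0$, and exactness immediately yields $\HC^n(A, L_1) \cong \HC^n(A)$.

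The case $n = 1$ is where the work lies. The term $\HC^1(L_1)$ on the right still vanishes, which makes $\HC^1(A, L_1) \to \HC^1(A)$ surjective; but now $\HC^0(L_1) = \mathbb{R}$ is nonzero, so injectivity is not automatic and must be argued. The crux is to show that the connecting homomorphism $\partial : \HC^0(L_1) \to \HC^1(A, L_1)$ vanishes. For this I would look one term further to the left, at the map $\HC^0(A) \to \HC^0(L_1)$ induced by restriction. Both groups are canonically $\mathbb{R}$ --- every effect algebra has $\HC^0 = \mathbb{R}$, as noted in the text --- and since each effect algebra has a single $1$-test and the inclusion sends $1$ to $1$, the restriction map $p^0$ is the identity $\mathbb{R} \to \mathbb{R}$. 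Hence $\HC^0(A) \to \HC^0(L_1)$ is onto, so by exactness at $\HC^0(L_1)$ its image, which equals $\ker \partial$, is all of $\HC^0(L_1)$; thus $\partial = 0$. Exactness at $\HC^1(A, L_1)$ then forces $\ker(\HC^1(A, L_1) \to \HC^1(A)) = \im \partial = 0$, giving injectivity, and together with surjectivity this is the desired isomorphism.

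The main obstacle is precisely this degree-$1$ argument: surjectivity for all positive $n$ follows uniformly from $\HC^n(L_1) = 0$, but pinning down injectivity in degree $1$ requires identifying the degree-$0$ restriction map as an isomorphism, which is where the unique $1$-test and the computation of $\HC^0$ come into play. Everything else is a mechanical reading of the long exact sequence.
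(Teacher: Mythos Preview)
Your proposal is correct and follows essentially the same argument as the paper: both proofs use the long exact sequence of the pair, handle $n\geq 2$ by the vanishing of the adjacent $\HC^{n-1}(L_1)$ and $\HC^n(L_1)$, and for $n=1$ identify the restriction map $\HC^0(A)\to\HC^0(L_1)$ as the identity on $\mathbb{R}$ to force the connecting homomorphism to vanish.
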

\begin{proof}
    Look at the long exact sequence for the pair $(A, L_1)$. We have
    seen that the cohomology of $L_1$ is $\mathbb{R}$ in degree 0 and zero
    elsewhere. Hence around degree 1 the long exact sequence looks like:
    \[ \HC^0(A) \cong \mathbb{R} \overset{\alpha}{\longrightarrow} \HC^0(L_1)
        \cong \mathbb{R} \overset{\beta}{\longrightarrow} \HC^1(A, L_1)
        \overset{\gamma}{\longrightarrow} \HC^1(A) \longrightarrow 0 \]
    The group $\HC^0(A)$ consists of all cocycles that map the trivial 1-test
    $(1)$ to a real number, and the same holds for the group $\HC^0(L_1)$.
    Since $\alpha$ is a restriction map, it is the identity on $\mathbb{R}$
    here. From exactness at $\HC^0(L_1)$ it follows that $\beta = 0$. This in
    turn implies that $\ker(\gamma) = \im(\beta) = 0$, so $\gamma$ is
    injective. But $\gamma$ is also surjective since the sequence is exact at
    $\HC^1(A)$, so $\HC^1(A, L_1) \cong \HC^1(A)$. This proves the result for
    $n=1$.

    For an arbitrary $n>1$, consider the fragment of the long exact sequence
    around degree $n$:
    \[ \HC^{n-1}(L_1) \longrightarrow \HC^n(A, L_1) \longrightarrow \HC^n(A)
        \longrightarrow \HC^n(L_1) \]
    Since $\HC^{n-1}(L_1) = \HC^n(L_1) = 0$, we conclude that $\HC^n(A, L_1)
    \cong \HC^n(A)$.
\end{proof}

The above proposition is useful to show that cyclic cohomology preserves
coproducts of effect algebras. For this property, it is essential that we use
cyclic cohomology. For Hochschild cohomology the analogous result is false.

\begin{corollary}
    For any $n>0$, $\HC^n(A+B) = \HC^n(A) \oplus \HC^n(B)$.
\end{corollary}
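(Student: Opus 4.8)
The plan is to reduce the statement to relative cohomology, where the trivial tests that $A$, $B$, and $A+B$ all share get quotiented away, and then to exhibit a direct-sum decomposition of the resulting relative cochain complex.

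First I would identify $A+B$ as the union of the two subalgebras $\iota_A(A)$ and $\iota_B(B)$, which intersect exactly in $L_1 = \{0,1\}$. By Proposition~\ref{prop:TestUnion}, every test on $A+B$ is a test on $\iota_A(A)$ or on $\iota_B(B)$; identifying these subalgebras with $A$ and $B$, this gives $T_n(A+B) = T_n(A) \cup T_n(B)$. The key combinatorial point is that a test lying in both $T_n(A)$ and $T_n(B)$ must have all its entries in $\iota_A(A) \cap \iota_B(B) = \{0,1\}$, hence is a trivial test, i.e.\ a test on $L_1$; conversely every trivial test lies in both. Thus the non-trivial tests split as a disjoint union $(T_n(A) \setminus T_n(L_1)) \sqcup (T_n(B) \setminus T_n(L_1))$.

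Next I would pass to the relative cochain complexes. Recall that $\ker(p^n)$ for a pair $(E, L_1)$ consists of the invariant cochains on $T_n(E)$ that vanish on the trivial tests, so $\ker(p^n)$ may be viewed as the invariant cochains supported on the non-trivial tests. Because the non-trivial tests of $A+B$ split disjointly as above, and each piece is closed under the cyclic permutation $\lambda$, such a cochain is precisely an arbitrary invariant cochain on the non-trivial $A$-tests together with an arbitrary invariant cochain on the non-trivial $B$-tests, giving a vector-space isomorphism $\ker(p^n_{A+B}) \cong \ker(p^n_A) \oplus \ker(p^n_B)$. To promote this to an isomorphism of cochain complexes I would check that $\delta^n$ respects the splitting: adding adjacent entries of an $A$-test again yields an $A$-test (and likewise for $B$), so $\delta^n$ restricted to the $A$-summand only consults values on $A$-tests. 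Taking cohomology, which commutes with finite direct sums, then yields $\HC^n(A+B, L_1) \cong \HC^n(A, L_1) \oplus \HC^n(B, L_1)$.

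Finally, for $n > 0$ I would invoke Proposition~\ref{prop:CohomologyTrivialTests} three times to replace each relative group by the corresponding absolute one, obtaining $\HC^n(A+B) \cong \HC^n(A) \oplus \HC^n(B)$. The restriction to $n > 0$ enters exactly here, since Proposition~\ref{prop:CohomologyTrivialTests} fails in degree zero (indeed $\HC^0$ is $\mathbb{R}$ for all three algebras, not $\mathbb{R} \oplus \mathbb{R}$). The only point requiring genuine care is the first step---verifying that non-trivial $A$- and $B$-tests cannot coincide---which is exactly what forces us to work relatively rather than attempting a naive direct-sum decomposition of $C_{\lambda}^{\bullet}(A+B)$ itself, where the shared trivial tests would obstruct the splitting.
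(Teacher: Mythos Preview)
Your proposal is correct and follows essentially the same approach as the paper: both reduce to relative cohomology modulo $L_1$ via Proposition~\ref{prop:CohomologyTrivialTests}, observe that the non-trivial tests on $A+B$ split disjointly into non-trivial $A$-tests and non-trivial $B$-tests, and conclude that the relative cochain complex decomposes as a direct sum. The only cosmetic difference is that you justify the test decomposition by invoking Proposition~\ref{prop:TestUnion} for the subalgebras $\iota_A(A)$ and $\iota_B(B)$, whereas the paper states it directly from the description of the coproduct.
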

\begin{proof}
    We will show that $\HC^n( A+B,L_1) \cong \HC^n(A,L_1) \oplus
    \HC^n(B,L_1)$; the result will then follow from the previous proposition.
    Call the cochain complex that computes $\HC^n(A,L_1)$
    $D^{\bullet}(A)$. Similarly there are cochain complexes
    $D^{\bullet}(B)$ and $D^{\bullet}(A+B)$. These
    complexes consist of all invariant cocycles that map trivial tests to zero.

    A test on a coproduct $A + B$ is either a trivial test, or a non-trivial
    test on $A$, or a non-trivial test on $B$. (Beware that we do not have
    $T_n(A+B) \cong T_n(A) + T_n(B)$, since $T_n(A+B)$ contains $n$ trivial
    tests, while the coproduct on the right-hand side contains $2n$ trivial
    tests.) Therefore $D^n(A+B) \cong D^n(A) \oplus
    D^n(B)$, from which the desired follows. 
\end{proof}

\section{K\"unneth sequence}
\label{sec:Kunneth}

To compute the cohomology groups of a product of two effect algebras, the
K\"unneth sequence is helpful. As before, we only consider cohomology with
coefficients in $\mathbb{R}$.

\begin{theorem}
    \label{thm:Kunneth}
    Let $A$ and $B$ be effect algebras. There is a long exact sequence
    \begin{IEEEeqnarray*}{rCl}
        \cdots & \longrightarrow &
        \HC^{n-1}(A \times B)
        \longrightarrow
        \bigoplus_{p+q = n-2} \HC^p(A) \otimes \HC^q(B) \\
        & \longrightarrow &
        \bigoplus_{p+q = n} \HC^p(A) \otimes \HC^q(B)
        \longrightarrow
        \HC^n(A \times B)
        \longrightarrow \cdots
    \end{IEEEeqnarray*}
\end{theorem}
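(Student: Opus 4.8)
The plan is to reduce the statement to the classical Künneth theorem for the cyclic (co)homology of a tensor product of cyclic modules, as developed in \cite{Loday98}. The first step is to understand tests on a product. Because the operations on $A \times B$ are pointwise, a sequence $((a_0,b_0),\ldots,(a_n,b_n))$ is an $(n{+}1)$-test precisely when $(a_0,\ldots,a_n)$ is a test on $A$ and $(b_0,\ldots,b_n)$ is a test on $B$. Hence $\tests_n(A \times B) \cong \tests_n(A) \times \tests_n(B)$, and one checks directly that this identification is compatible with the face, degeneracy, and cyclic operators, so that $\tests(A \times B) \cong \tests(A) \times \tests(B)$ as cyclic sets. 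Dualizing levelwise, the cocyclic module computing $\HC^\bullet(A \times B)$, namely $\mathbb{R}^{\tests_\bullet(A \times B)} = \mathbb{R}^{\tests_\bullet(A) \times \tests_\bullet(B)}$, becomes the levelwise tensor product $\mathbb{R}^{\tests_\bullet(A)} \otimes \mathbb{R}^{\tests_\bullet(B)}$ of the cocyclic modules of $A$ and $B$. For this last identification one needs $\mathbb{R}^{X \times Y} \cong \mathbb{R}^X \otimes \mathbb{R}^Y$, which holds when the test sets are finite; I would state the theorem for finite effect algebras, or replace $\otimes$ by a completed tensor product in general.

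Second, I would pass from cocyclic modules to mixed complexes and invoke the cyclic Eilenberg--Zilber theorem. Each cocyclic module has an associated mixed complex $(C^\bullet, b, B)$ whose Hochschild differential $b$ is the alternating sum of face maps (the $\delta^n$ of the excerpt) and whose Connes operator $B$ is built from the cyclic operator $\lambda$; the total complex of the associated bicomplex computes $\HC^\bullet$. The cyclic Eilenberg--Zilber theorem states that the mixed complex of a levelwise tensor product of cocyclic modules is quasi-isomorphic, as a mixed complex, to the tensor product of the two mixed complexes $(C^\bullet(A) \otimes C^\bullet(B),\, b \otimes 1 \pm 1 \otimes b,\, B \otimes 1 \pm 1 \otimes B)$. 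I would cite this rather than reprove it.

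Third, with the mixed-complex identification in hand, the result becomes a purely homological statement about the cyclic cohomology of a tensor product of mixed complexes over the field $\mathbb{R}$. The cyclic cohomology of any mixed complex is a graded module over the polynomial ring $\mathbb{R}[u]$, where $u$ acts as the periodicity operator $S$, of degree $2$ in the cohomological convention. Since $\mathbb{R}[u]$ is a principal ideal domain, the Künneth spectral sequence for the tensor product of mixed complexes degenerates into a single short exact sequence involving $\mathrm{Tor}_1^{\mathbb{R}[u]}$. Unrolling this Tor sequence, exactly as in Loday's derivation of the cyclic Künneth sequence, produces the stated long exact sequence: the degree-raising map $\bigoplus_{p+q=n-2}\HC^p(A)\otimes\HC^q(B) \to \bigoplus_{p+q=n}\HC^p(A)\otimes\HC^q(B)$ is the operator $S \otimes 1 - 1 \otimes S$, and the map into $\HC^n(A\times B)$ is the external cohomology product.

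The main obstacle is the cyclic Eilenberg--Zilber step. The usual shuffle and Alexander--Whitney maps give a quasi-isomorphism of Hochschild complexes almost for free, but the delicate point is that they must be promoted to maps of mixed complexes, that is, shown to be compatible up to coherent homotopy with the Connes operator $B$ and not merely with $b$. This compatibility is exactly what makes the passage from the Hochschild Künneth formula to the cyclic Künneth sequence nontrivial, and it is where the degree-$2$ operator $S$ enters. The remaining subtlety is the finiteness caveat noted above, which I would dispatch either by restricting to finite effect algebras or by working with the appropriate completed tensor product throughout.
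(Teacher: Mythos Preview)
Your approach is essentially the same as the paper's: identify $\tests_n(A\times B)\cong \tests_n(A)\times \tests_n(B)$, deduce that the cochain complex of $A\times B$ is the levelwise tensor product of the two cocyclic modules, and then invoke the cyclic K\"unneth theorem from \cite{Loday98} (the paper simply cites the dual of Thm.~4.3.11 there, whereas you unpack its proof via mixed complexes, cyclic Eilenberg--Zilber, and the $\mathbb{R}[u]$-module structure). Your finiteness caveat for the isomorphism $\mathbb{R}^{X\times Y}\cong \mathbb{R}^X\otimes\mathbb{R}^Y$ is a genuine technical point that the paper glosses over; the paper states the result for arbitrary effect algebras but the argument as written only goes through when the test sets are finite (or with a completed tensor product, as you note).
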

\begin{proof}
    Tests on a product algebra satisfy $T_n(A \times B) \cong T_n(A) \times
    T_n(B)$. Therefore $\Hom(T_n(A \times B), \mathbb{R}) \cong \Hom(T_n(A),
    \mathbb{R}) \otimes \Hom(T_n(B), \mathbb{R})$, so to compute the
    cohomology of the product, we have to look at the cohomology of a tensor
    product of cyclic modules. According to the dual of \cite[Thm. 4.3.11]{Loday98},
    this can be computed using the sequence in the theorem.
\end{proof}

\begin{example}
    \label{ex:CohomologyPowerSetTwo}
    We will compute the cohomology of the power set effect algebra $\Pow(2) =
    L_1 \times L_1$. We have already seen in
    Example~\ref{ex:CohomologyTwoElementAlgebra} that the cohomology of $L_1$
    is $\mathbb{R}$ in degree zero, and vanishes elsewhere. The fragment of
    the K\"unneth sequence around degree 1 looks like:
    \begin{IEEEeqnarray*}{rCl}
        \IEEEeqnarraymulticol{3}{l}{
            (\HC^0(L_1) \otimes \HC^1(L_1)) \oplus
            (\HC^1(L_1) \otimes \HC^0(L_1)) } \\
        \quad &\longrightarrow&
        \HC^1(\Pow(2))
        \longrightarrow
        \HC^0(L_1) \otimes \HC^0(L_1) \\
        &\longrightarrow&
        (\HC^0(L_1) \otimes \HC^2(L_1)) \oplus
        (\HC^1(L_1) \otimes \HC^1(L_1)) \oplus
        (\HC^2(L_1) \otimes \HC^0(L_1))
    \end{IEEEeqnarray*}
    The outer groups in this sequence are zero, by the computation of the
    cohomology of $L_1$. It follows that $\HC^1(\Pow(2)) \cong \HC^0(L_1)
    \otimes \HC^0(L_1) \cong \mathbb{R}$. Furthermore, the cohomology in
    degree zero is $\mathbb{R}$ since this holds for all effect algebras, and
    from the K\"unneth sequence it can be deduced that it is zero in degrees
    at least two.
\end{example}

If the connecting morphisms in the above sequence are unknown, then applying
the theorem can be problematic. In this case, it may be easier to compute
cyclic cohomology using Hochschild cohomology as an intermediate step. In the
remainder of this section, we will use this technique to compute the cyclic
cohomology of a power set effect algebra $\Pow(m)$, which is a product of $m$
copies of $L_1$. First we observe that the K\"unneth formula for Hochschild
cohomology assumes a particularly simple form.

\begin{proposition}
    \label{prop:KunnethHochschild}
    Let $A$ and $B$ be effect algebras. Then
    \[ \HH^n(A \times B) \cong \bigoplus_{p+q = n} \HH^p(A) \otimes \HH^q(B). \]
\end{proposition}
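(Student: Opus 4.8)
The plan is to proceed exactly as in the proof of Theorem~\ref{thm:Kunneth}, taking advantage of the fact that the Hochschild cochain complex is far more tractable than the cyclic one precisely because we do not pass to the invariant subcomplex. Recall that $\HH^n(A)$ is the cohomology of the complex $C^{\bullet}(A)$ with $C^n(A) = \Hom(T_n(A), \mathbb{R})$ and differential $\delta^n$ the alternating sum of the duals of the face maps $d_i$. The first step is to identify this complex for the product. As already used in Theorem~\ref{thm:Kunneth}, tests on a product satisfy $T_n(A \times B) \cong T_n(A) \times T_n(B)$, and the face maps act diagonally. Dualizing yields $C^n(A \times B) \cong C^n(A) \otimes C^n(B)$, so that $C^{\bullet}(A \times B)$ is the \emph{diagonal} $\mathrm{diag}\bigl(C^{\bullet}(A) \otimes C^{\bullet}(B)\bigr)$ of the bicomplex obtained by tensoring the two complexes levelwise, with differential the alternating sum of the diagonal face operators.

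Next I would invoke the Eilenberg--Zilber theorem (see e.g.~\cite{Loday98,Weibel94}) to replace this diagonal by the total complex of the tensor-product bicomplex. The Alexander--Whitney and shuffle maps provide an explicit chain homotopy equivalence between $\mathrm{diag}\bigl(C^{\bullet}(A) \otimes C^{\bullet}(B)\bigr)$ and $\mathrm{Tot}\bigl(C^{\bullet}(A) \otimes C^{\bullet}(B)\bigr)$, whose differential is $\delta_A \otimes 1 \pm 1 \otimes \delta_B$. Since chain homotopy equivalences induce isomorphisms on cohomology, this gives $\HH^n(A \times B) \cong H^n\bigl(\mathrm{Tot}(C^{\bullet}(A) \otimes C^{\bullet}(B))\bigr)$.

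Finally I would apply the purely algebraic Künneth theorem to this tensor product of complexes of $\mathbb{R}$-vector spaces. Because $\mathbb{R}$ is a field, every vector space is flat and all $\mathrm{Tor}$ correction terms vanish, so the Künneth short exact sequence collapses to the asserted isomorphism $H^n\bigl(\mathrm{Tot}(C^{\bullet}(A) \otimes C^{\bullet}(B))\bigr) \cong \bigoplus_{p+q=n} \HH^p(A) \otimes \HH^q(B)$. This also explains why the Hochschild formula is a clean direct-sum decomposition while the cyclic version of Theorem~\ref{thm:Kunneth} only produces a long exact sequence: the failure to split there stems entirely from the cyclic structure, which the Hochschild complex simply forgets.

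The main obstacle is the second step, namely the passage from the diagonal complex to the tensor-product bicomplex. Concretely one must verify that the alternating sum of diagonal face maps corresponds, up to chain homotopy, to the total differential $\delta_A \otimes 1 \pm 1 \otimes \delta_B$; this is exactly the content of Eilenberg--Zilber, and the only delicate points are tracking signs and handling the cohomological (rather than homological) indexing correctly. Once this equivalence is established, the remaining two steps are formal.
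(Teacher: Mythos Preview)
Your argument is correct and is essentially a detailed unpacking of the paper's own proof, which consists of a single citation to the K\"unneth theorem in \cite{Weibel94} together with the observation that over the field $\mathbb{R}$ the $\mathrm{Tor}$ terms vanish. The paper implicitly relies on the same identification $C^n(A\times B)\cong C^n(A)\otimes C^n(B)$ already established in the proof of Theorem~\ref{thm:Kunneth}, and the Eilenberg--Zilber step you spell out is exactly what is needed to bridge the diagonal cosimplicial complex and the total tensor complex to which the algebraic K\"unneth formula applies; the paper simply absorbs this into the citation.
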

\begin{proof}
    This follows from e.g.\ \cite[Thm. 3.6.3]{Weibel94}, using that we take
    coefficients in a field.
\end{proof}

We will also need a connection between cyclic and Hochschild cohomology, in
the case where we work with a product of copies of $L_1$. 

\begin{lemma}
    \label{lem:HochschildCyclicProduct}
    For any effect algebra $A$, $\HH^n(A) \cong \HC^n(A \times L_1)$.
\end{lemma}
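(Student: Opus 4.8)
The plan is to exhibit an explicit isomorphism of cochain complexes between the Hochschild complex $C^\bullet(A)$ and the cyclic complex $C_\lambda^\bullet(A \times L_1)$, and then pass to cohomology. First I would use the product formula for tests, $T_n(A \times L_1) \cong T_n(A) \times T_n(L_1)$, which was already used in the proof of Theorem~\ref{thm:Kunneth}. The key observation is that $T_n(L_1)$ is very small: a test on $L_1 = \{0,1\}$ has a single $1$, so $T_n(L_1) = \{e_0, \ldots, e_n\}$ consists of exactly $n+1$ elements, where $e_j$ has its $1$ in position $j$. Moreover the cyclic operator $\lambda$ permutes these freely and transitively, $\lambda(e_j) = e_{j+1}$ (indices mod $n+1$), so $\langle \lambda \rangle \cong \mathbb{Z}/(n+1)$ acts simply transitively on $T_n(L_1)$.

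Because $\lambda$ acts diagonally on $T_n(A \times L_1) = T_n(A) \times T_n(L_1)$ and freely on the second factor, the whole diagonal action is free, and the slice $T_n(A) \times \{e_0\}$ is a fundamental domain: every orbit meets it exactly once. I would therefore define $\Phi \colon C_\lambda^n(A \times L_1) \to \mathbb{R}^{T_n(A)} = C^n(A)$ by restriction, $\Phi(\alpha)(s) = \alpha(s, e_0)$. Using the invariance condition $\alpha(\lambda u) = (-1)^n \alpha(u)$ together with $(s, e_i) = \lambda^i(\lambda^{-i}s, e_0)$, an invariant cochain is recovered from its values on the slice by $\alpha(s, e_i) = (-1)^{ni}\Phi(\alpha)(\lambda^{-i}s)$; conversely this formula produces a well-defined invariant cochain from any $f \in \mathbb{R}^{T_n(A)}$, where well-definedness across the wrap-around index $i = n$ uses $\lambda^{n+1} = \id$ and the fact that $n(n+1)$ is even. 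Hence $\Phi$ is a linear isomorphism in each degree.

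The decisive point is that $\Phi$ is a cochain map, and this rests on the small miracle that \emph{all} face maps fix $e_0$: writing $e_0 = (1,0,\ldots,0)$, each $d_i$ either adjoins an adjacent $0$ to the leading $1$ or combines two zeros, so $d_i e_0 = e_0$ for every $i$, including the wrap-around face $d_{n+1}$. Consequently the diagonal faces satisfy $d_i(s, e_0) = (d_i s, e_0)$, and the coboundary $\delta^n$ on $C_\lambda^\bullet(A \times L_1)$ restricts on the $e_0$-slice to precisely the Hochschild coboundary on $C^\bullet(A)$, since the two coboundaries are given by the same alternating-sum-over-faces formula. Therefore $\Phi \circ \delta^n = \delta^n \circ \Phi$, and $\Phi$ induces isomorphisms $\HC^n(A \times L_1) \cong \HH^n(A)$ on cohomology.

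I expect the main obstacle to be bookkeeping rather than conceptual: getting the signs and index arithmetic right in the inverse of $\Phi$, in particular checking invariance of the reconstructed cochain at the wrap-around index and confirming that the sign twist $(-1)^{ni}$ is consistent modulo $n+1$. Once the slice $T_n(A) \times \{e_0\}$ is seen to be a fundamental domain that is stable under all faces, the chain-map identity is immediate from the shared coboundary formula.
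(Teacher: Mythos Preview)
Your proposal is correct and is essentially the paper's argument run in the inverse direction: the paper defines $f:C^n(A)\to C_\lambda^n(A\times L_1)$ by $(f\alpha)((a_0,k_0),\ldots,(a_n,k_n))=(-1)^{in}\alpha(a_i,\ldots,a_n,a_0,\ldots,a_{i-1})$ when $k_i=1$, which is precisely your reconstruction formula $\alpha(s,e_i)=(-1)^{ni}\Phi(\alpha)(\lambda^{-i}s)$, and your $\Phi$ is its inverse. The one presentational difference is that the paper verifies the chain-map property by an explicit coboundary computation (first for $k_0=1$, then reducing the general case via invariance), whereas your observation that $d_i e_0=e_0$ for all $i$, including the wrap-around face, gives the same conclusion in one line; this is a nice packaging of the same mechanism rather than a genuinely different route.
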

\begin{proof}
    We will show that the complex computing $\HH(A)$ is isomorphic to the
    complex computing $\HC(A \times L_1)$. Define a map $f :
    \mathbb{R}^{\tests_n(A)} \to C_{\lambda}^n(A \times L_1)$ in the following way.
    Take an arbitrary $\alpha : \tests_n(A) \to \mathbb{R}$ and an arbitrary
    test $((a_0, k_0), \ldots, (a_n, k_n))$ on $A \times L_1$. A test on $L_1$
    has a 1 at exactly one position, and zeroes everywhere else. Let $i$ be the
    unique index for which $k_i = 1$. Then put
    \[ (f\alpha)((a_0, k_0), \ldots, (a_n, k_n)) = (-1)^{in} \alpha(a_i, 
        a_{i+1}, \ldots, a_n, a_0, \ldots, a_{i-1}). \]

    To show that $f\alpha$ actually lies in $C_{\lambda}^n(A \times L_1)$, we
    have to prove that it is invariant under cyclic permutations, i.e.
    \[ (f\alpha)((a_n,k_n), (a_0,k_0), \ldots, (a_{n-1}, k_{n-1})) =
    (-1)^n (f\alpha)((a_0,k_0), \ldots, (a_n,k_n)). \]
    Suppose that the $i^\mathrm{th}$ entry of the test $(k_0, \ldots, k_n)$
    satisfies $k_i = 1$, and $i < n$. Then the $(i+1)^\mathrm{th}$ entry of
    $(k_n, k_0, \ldots, k_{n-1})$ has value $1$. Hence
    \begin{IEEEeqnarray*}{rCl}
        (f\alpha)((a_n,k_n), (a_0,k_0), \ldots, (a_{n-1}, k_{n-1})) 
        &=&
        (-1)^{(i+1)n} \alpha(a_i, \ldots, a_n, a_0, \ldots, a_{i-1}) \\
        &=& (-1)^n (-1)^{in} \alpha(a_i, \ldots, a_n, a_0, \ldots, a_{i-1}) \\
        &=& (-1)^n (f\alpha)((a_0,k_0), \ldots, (a_n,k_n))
    \end{IEEEeqnarray*}
    A similar computation shows that the result still holds if $i=n$.

    Now we will verify that $f$ is a chain map from the Hochschild complex to
    the cyclic complex. To achieve this, we have to check that
    \[ (f\delta\alpha)((a_0,k_0), \ldots, (a_n,k_n)) =
    (\delta f \alpha)((a_0,k_0), \ldots, (a_n,k_n)). \]
    First assume that $k_0 = 1$. Then the left-hand side of this equation
    becomes
    \[ 
        (\delta\alpha)(a_0, \ldots, a_n)
        = \sum_{j=0}^{n-1} (-1)^j \alpha(a_0, \ldots, a_j \eaplus a_{j+1},
        \ldots, a_{n})
        + (-1)^{n} \alpha(a_{n} \eaplus a_0, a_1, \ldots, a_{n-1}).
    \]
    The right-hand side equals
    \begin{IEEEeqnarray*}{l}
        \sum_{j=0}^{n-1} (-1)^j (f\alpha)((a_0,k_0), \ldots,
        (a_j \eaplus a_{j+1}, k_j \eaplus k_{j+1}),
        \ldots, (a_{n},k_n)) \\
        \quad \negmedspace {}
        + (-1)^{n} (f\alpha)((a_{n} \eaplus a_0, k_n \eaplus k_0),
        (a_1,k_1), \ldots, (a_{n-1},k_{n-1})).
    \end{IEEEeqnarray*}
    In each term of this sum, the first entry of the test has $1$ as second
    component. Therefore it is equal to the left-hand side.

    Now assume that $k_i = 1$ for some $i \neq 0$. We can reduce this to the
    previous case by permuting the tests cyclically. Since $f(\delta\alpha)$ is
    invariant under cyclic permutations, we have 
    \[  
        (f\delta\alpha)((a_0,k_0), \ldots, (a_n,k_n))
        = (-1)^{in} (f\delta\alpha)((a_i,k_i), \ldots, (a_n,k_n),
        (a_0,k_0), \ldots, (a_{n-1},k_{n-1})).
    \]
    Furthermore, since $f\alpha$ is invariant and $\delta$ maps invariant
    cochains to invariant cochains, also $\delta f \alpha$ is invariant under
    cyclic permutations. Hence
    \[  
        (\delta f \alpha)((a_0,k_0), \ldots, (a_n,k_n))
        = (-1)^{in} (\delta f \alpha)((a_i,k_i), \ldots, (a_n,k_n),
        (a_0,k_0), \ldots, (a_{n-1},k_{n-1})).
    \]
    But in the test $((a_i,k_i), \ldots, (a_n, k_n),(a_0,k_0), \ldots,
    (a_{n-1}, k_{n-1}))$, the first entry has a $1$ as second component, so we
    are back in the previous case. This shows that $f$ is a chain map.

    The final step is proving that $f$ is a bijection. For injectivity,
    suppose that $f\alpha = f\beta$. Then for each test $((a_0,k_0), \ldots,
    (a_n,k_n))$ we have
    \[ \alpha(a_i, \ldots, a_n, a_0, \ldots, a_{i-1}) =
    \beta(a_i, \ldots, a_n, a_0, \ldots, a_{i-1}). \]
    Let $(a_0, \ldots, a_n)$ be
    an arbitrary test on $A$. Take the test $(k_0, \ldots, k_n)$ on $L_1$
    defined by $k_0 = 1$ and $k_i = 0$ for $i \neq 0$. This yields
    $\alpha(a_0, \ldots, a_n) = \beta(a_0, \ldots, a_n)$. Since $(a_0, \ldots,
    a_n)$ was arbitrary, $f$ is injective.

    To establish surjectivity, let $\beta : \tests_n(A \times L_1) \to
    \mathbb{R}$ be a map invariant under cyclic permutations.
    Define $\alpha : \tests_n(A) \to \mathbb{R}$ by
    \[ \alpha(a_0, \ldots,
    a_n) = \beta((a_0,1), (a_1, 0), \ldots, (a_n,0)). \]
    In order to show that $f\alpha = \beta$, take a test $((a_0,k_0), \ldots,
    (a_n,k_n))$ with $k_i = 1$. Then
    \begin{IEEEeqnarray*}{rCl}
        (f\alpha)((a_0,k_0) \ldots, (a_n,k_n))
        &=& (-1)^{in} \alpha(a_i, \ldots, a_n, a_0, \ldots, a_{i-1}) \\
        &=& (-1)^{in} \beta((a_i,1), (a_{i+1},0), \ldots, (a_n,0),
        (a_0,0), \ldots, (a_{i-1},0)) \\
        &=& \beta((a_0,k_0), \ldots, (a_n,k_n))
    \end{IEEEeqnarray*}
    where we used invariance of $\beta$ in the last step.
\end{proof}

\begin{example}
    \label{ex:CohomologyPowerSet}
    We will compute the cyclic cohomology groups of all power set effect
    algebras $\Pow(m)$. First we will determine their Hochschild cohomology.
    For $m=1$, apply Lemma~\ref{lem:HochschildCyclicProduct} and
    Example~\ref{ex:CohomologyPowerSetTwo} to find $\HH^n(\Pow(1)) \cong
    \HC^n(\Pow(2))$, which is $\mathbb{R}$ in degrees 0 and 1, and zero in all
    higher degrees. By applying the K\"unneth formula from
    Proposition~\ref{prop:KunnethHochschild} with induction to $m$, we obtain
    $\HH^n(\Pow(m)) \cong \mathbb{R}^{\binom{m}{n}}$. From
    Lemma~\ref{lem:HochschildCyclicProduct} it now follows that
    $\HC^n(\Pow(m)) \cong \HH^n(\Pow(m-1)) \cong \mathbb{R}^{\binom{m-1}{n}}$.
\end{example}

\section{Mayer--Vietoris sequence}
\label{sec:MayerVietoris}

A finite orthoalgebra is the union of its maximal Boolean subalgebras, as
discussed in Section~\ref{sec:Tests}. Since these are generated by the
maximal tests, the orthoalgebra is completely determined by its atoms and
maximal tests. In this section we will establish a Mayer--Vietoris sequence for
the cyclic cohomology of an effect algebra, which relates the cohomology of a
union to the cohomology of the constituents and their intersection. Since we
already know the cohomology of finite Boolean algebras, this yields a
technique for computing the cohomology of any finite orthoalgebra. Using the
Mayer--Vietoris sequence is usually a very efficient way to determine the
cohomology groups, since it only involves the atoms and the maximal tests,
instead of the collection of all tests on the effect algebra.

\begin{theorem}
    \label{thm:MayerVietoris}
    Let $A$ and $B$ be subalgebras of an effect algebra $E$, such that $E = A
    \cup B$.  Then there is a long exact sequence
    \[
        \cdots
        \longrightarrow
        \HC^{n-1}(A \cap B)
        \longrightarrow
        \HC^n(E)
        \longrightarrow
        \HC^n(A) \oplus \HC^n(B)
        \longrightarrow
        \HC^n(A \cap B)
        \longrightarrow
        \HC^{n+1}(E)
        \longrightarrow
        \cdots
    \]
\end{theorem}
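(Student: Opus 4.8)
The plan is to derive the Mayer--Vietoris sequence from a short exact sequence of cochain complexes, exactly as relative cohomology in Section~\ref{sec:RelativeCohomology} was obtained. The key input is Proposition~\ref{prop:TestUnion}, which tells us that any test on $E = A \cup B$ is a test on $A$ or a test on $B$. In terms of the cyclic sets of tests, this says $T_n(E) = T_n(A) \cup T_n(B)$ as subsets of $T_n(E)$, with intersection $T_n(A) \cap T_n(B) = T_n(A \cap B)$. (One should check that a test lying in both $A$ and $B$ has all its entries in $A \cap B$; since each entry of a test is determined by the others via orthocomplementation, an entry lying in $A$ and the rest lying in $A \cap B$ forces that entry into $A \cap B$ as well, using that $A$ and $B$ are subalgebras.)

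Dualizing this union-of-sets picture into the function spaces $\mathbb{R}^{T_n(-)}$ reverses inclusions, so I would set up the short exact sequence of cochain complexes
\[
    0 \longrightarrow C_{\lambda}^{\bullet}(E)
    \overset{r}{\longrightarrow} C_{\lambda}^{\bullet}(A) \oplus C_{\lambda}^{\bullet}(B)
    \overset{s}{\longrightarrow} C_{\lambda}^{\bullet}(A \cap B)
    \longrightarrow 0,
\]
where $r(\alpha) = (\alpha|_{T_{\bullet}(A)}, \alpha|_{T_{\bullet}(B)})$ is the pair of restriction maps and $s(\beta, \gamma) = \beta|_{T_{\bullet}(A \cap B)} - \gamma|_{T_{\bullet}(A\cap B)}$ is the difference of the further restrictions. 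Each of $r$ and $s$ commutes with the coboundary maps $\delta^n$ (they are defined by restriction, and $\delta^n$ is given by a formula on tests that is compatible with restriction to subalgebras) and with the cyclic action $\lambda$, so these are genuine maps of complexes of invariant cochains. Exactness at $C_{\lambda}^{\bullet}(E)$ is injectivity of $r$: a cochain on $E$ vanishing on every $A$-test and every $B$-test vanishes on all of $T_n(E) = T_n(A) \cup T_n(B)$. Exactness at the right, surjectivity of $s$, follows because any invariant cochain on $A \cap B$ extends (e.g.\ by zero outside, then symmetrized, or simply realized as $\beta|_{A\cap B}$ for a suitable $\beta$). Exactness in the middle, $\ker(s) = \im(r)$, is the heart of the matter: a compatible pair $(\beta, \gamma)$ agreeing on $A \cap B$ must glue to a single cochain on $T_n(E)$, and this gluing is well-defined precisely because $T_n(A) \cap T_n(B) = T_n(A \cap B)$.

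Once the short exact sequence of cochain complexes is in place, the long exact sequence in cohomology is produced automatically by the standard homological-algebra machinery (the same \cite{Weibel94} snake-lemma argument already invoked for relative cohomology), giving exactly the stated Mayer--Vietoris sequence with connecting maps $\HC^n(A \cap B) \to \HC^{n+1}(E)$.

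The main obstacle I anticipate is exactness in the middle, and more specifically the subtle bookkeeping around \emph{invariant} (cyclic) cochains rather than arbitrary ones. The set-theoretic gluing $T_n(E) = T_n(A) \cup T_n(B)$ is clean, but I must confirm that gluing a compatible pair of $\lambda$-invariant cochains yields a $\lambda$-invariant cochain on $E$, and that the restriction maps land in the invariant subcomplexes with the claimed kernels and images. A potential pitfall is that the cyclic permutation $\lambda$ could carry a test between $T_n(A) \setminus T_n(B)$ and $T_n(B) \setminus T_n(A)$; however, $\lambda$ permutes the entries of a test and therefore preserves the subalgebra in which all entries lie, so it maps $T_n(A)$ to $T_n(A)$ and likewise for $B$, which keeps the invariance compatible with the decomposition. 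Verifying this compatibility carefully — together with the identification $T_n(A) \cap T_n(B) = T_n(A \cap B)$ from Proposition~\ref{prop:TestUnion} — is the crux; everything else is the formal passage from a short exact sequence of complexes to its long exact sequence.
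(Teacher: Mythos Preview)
Your proposal is correct and follows essentially the same approach as the paper: construct the short exact sequence of invariant cochain complexes via the pair-of-restrictions map and the difference-of-restrictions map, verify exactness using Proposition~\ref{prop:TestUnion}, and then invoke the standard long-exact-sequence machinery. If anything, you are slightly more careful than the paper in explicitly checking that the cyclic action $\lambda$ preserves $T_n(A)$ and $T_n(B)$ so that the gluing and restriction maps respect invariance; the paper takes this for granted.
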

\begin{proof}
    We shall construct a short exact sequence
    \[ 0 \longrightarrow C_{\lambda}^n(E)
        \overset{\varphi}{\longrightarrow}
        C_{\lambda}^n(A) \oplus C_{\lambda}^n(B)
        \overset{\psi}{\longrightarrow} C_{\lambda}^n(A \cap B)
        \longrightarrow 0, \]
    which will induce the desired long exact sequence in cohomology.
    Define $\varphi : C_{\lambda}^n(E) \to C_{\lambda}^n(A) \oplus
    C_{\lambda}^n(B)$ by restricting to tests on the subalgebras, i.e.\
    $\varphi(\alpha) = \left( \alpha |_{\tests_n(A)}, \alpha |_{\tests_n(B)}
    \right)$. The map $\psi$ is defined by $\psi(\alpha,\beta) = \alpha
    |_{\tests_n(A \cap B)} - \beta |_{\tests_n(A \cap B)}$.

    Now we will show that the maps $\varphi$ and $\psi$ yield a short exact
    sequence. To show that $\varphi$ is injective, suppose that
    $\varphi(\alpha) = \varphi(\beta)$. Then $\alpha(t) = \beta(t)$ for all
    tests $t$ on $A$, and all tests $t$ on $B$. Hence, by
    Proposition~\ref{prop:TestUnion}, $\alpha = \beta$, establishing
    injectivity.

    We continue by proving surjectivity of $\psi$. Take any $\gamma \in
    C_{\lambda}^n(A \cap B)$. Define $\alpha \in C_{\lambda}^n(A)$ and $\beta
    \in C_{\lambda}^n(B)$ as follows: for any test $t$ on $A \cap B$, define
    $\alpha(t) = \frac{1}{2} \gamma(t)$ and $\beta(t) = - \frac{1}{2}
    \gamma(t)$. On all tests that do not lie completely inside $A \cap B$,
    $\alpha$ and $\beta$ are zero. Then, for each test $t$ on $A \cap B$,
    $\psi(\alpha,\beta)(t) = \alpha(t) - \beta(t) = \frac{1}{2} \gamma(t) +
    \frac{1}{2} \gamma(t) = \gamma(t)$, so $\psi$ is surjective.

    Finally we will show that the sequence is exact in the middle. If $\alpha
    \in C_{\lambda}^n(E)$, then $\alpha |_{\tests_n(A)}$ and $\alpha
    |_{\tests_n(B)}$ agree on the intersection $\tests_n(A \cap B)$. It follows
    that $(\psi \circ \varphi)(\alpha) = 0$, hence $\im(\varphi) \subseteq
    \ker(\psi)$. Conversely, suppose that $\alpha \in C_{\lambda}^n(A)$ and
    $\beta \in C_{\lambda}^n(B)$ agree on $\tests_n(A \cap B)$. We have to show
    that both are restrictions of some $\gamma \in C_{\lambda}^n(E)$. Let $t$
    be a test on $E$. By Proposition~\ref{prop:TestUnion}, $t$ is either a
    test on $A$ or a test on $B$. If it is a test on $A$, define $\gamma(t) =
    \alpha(t)$; if it is a test on $B$, define $\gamma(t) = \beta(t)$. Then
    $\gamma$ is well-defined because $\alpha$ and $\beta$ agree on the
    intersection, and it restricts to $\alpha$ and $\beta$ on $\tests_n(A)$ and
    $\tests_n(B)$, respectively. This concludes the proof that $\im(\varphi) =
    \ker(\psi)$.
\end{proof}

\begin{example}
    \label{ex:CohomologyFirefly}
    We will compute the cohomology groups of the effect algebra from
    Example~\ref{ex:firefly}.
    Call the effect algebra $E$, let
    $A$ be the subalgebra generated by the atoms $a,b,e$, and let $B$ be the
    subalgebra generated by $c,d,e$. Then $E = A \cup B$, and $A \cong B \cong
    \Pow(3)$. Furthermore, $A \cap B$ consists of the four elements $0$, $e$,
    $a \eaplus b = c \eaplus d$, and $a \eaplus b \eaplus e = c \eaplus d
    \eaplus e = 1$, so it is isomorphic to $\Pow(2)$.  Plugging this
    information into the Mayer--Vietoris sequence gives
    \begin{IEEEeqnarray*}{l}
        \HC^0(\Pow(2))
        \overset{\partial_0}{\longrightarrow}
        \HC^1(E)
        \overset{\alpha}{\longrightarrow}
        \HC^1(\Pow(3)) \oplus \HC^1(\Pow(3))
        \overset{\beta}{\longrightarrow}
        \HC^1(\Pow(2)) \\
        \quad \overset{\partial_1}{\longrightarrow}
        \HC^2(E)
        \overset{\gamma}{\longrightarrow}
        \HC^2(\Pow(3)) \oplus \HC^2(\Pow(3))
        \overset{\delta}{\longrightarrow}
        \HC^2(\Pow(2))
    \end{IEEEeqnarray*}
    Recall from Example~\ref{ex:CohomologyPowerSet} that $\HC^n(\Pow(m)) \cong
    \mathbb{R}^{\binom{m-1}{n}}$.

    Since the coboundary map $\delta^0$ is always zero, the connecting
    homomorphism $\partial_0$ is zero as well. From exactness of the
    Mayer--Vietoris sequence it follows that $\HC^1(E) \cong \im(\alpha) =
    \ker(\beta)$. The first cohomology group of an effect algebra consists of
    additive maps into $\mathbb{R}$ that map $1$ to $0$. Since every additive
    map $\Pow(2) \to \mathbb{R}$ can be extended to an additive map $\Pow(3)
    \to \mathbb{R}$, $\beta$ is surjective, hence $\HC^1(E) \cong
    \mathbb{R}^3$.

    Similarly we can compute the second cohomology group. Surjectivity of
    $\beta$ gives $\partial_1 = 0$. Furthermore $\HC^2(\Pow(2)) = 0$, hence
    $\HC^2(E) \cong \HC^2(\Pow(3)) \oplus \HC^2(\Pow(3)) \cong
    \mathbb{R}^2$. Since all higher cohomology groups of $\Pow(3)$ are zero,
    all groups $\HC^n(E)$ for $n \geq 3$ are zero as well.
\end{example}

The Mayer--Vietoris sequence can be applied repeatedly to find the cohomology
of orthoalgebras with more than two blocks. However, one has to be careful
that all unions of blocks encountered at intermediate stages are actual
subalgebras, since otherwise Theorem~\ref{thm:MayerVietoris} does not apply.
We give an example where this phenomenon plays a role.

\begin{example}
    \label{ex:bike}
    Consider the orthoalgebra $E$ with Greechie diagram
    \begin{center}
        \begin{tikzpicture}
            \draw [fill=black] (0,0) circle (0.1);
            \draw [fill=black] (1,0) circle (0.1);
            \draw [fill=black] (2,0) circle (0.1);
            \draw [fill=black] (3,0) circle (0.1);
            \draw [fill=black] (0,1) circle (0.1);
            \draw [fill=black] (1,1) circle (0.1);
            \draw [fill=black] (2,1) circle (0.1);
            \draw [fill=black] (3,1) circle (0.1);

            \node at (0,1.4) {$a$};
            \node at (1,1.4) {$b$};
            \node at (2,1.4) {$c$};
            \node at (3,1.4) {$d$};
            \node at (0,-0.4) {$e$};
            \node at (1,-0.4) {$f$};
            \node at (2,-0.4) {$g$};
            \node at (3,-0.4) {$h$};

            \draw (0,0) -- (3,0);
            \draw (0,1) -- (3,1);
            \draw (0.5,0.5) circle (0.707);
            \draw (2.5,0.5) circle (0.707);
        \end{tikzpicture}
    \end{center}
    Naively, one could try to compute the cohomology of $E$ by adding one
    block at the time, for instance by first using Mayer--Vietoris to obtain
    the cohomology of the left diagram, and then using the result to obtain
    the cohomology of the right diagram:
    \begin{center}
        \begin{tikzpicture}
            \def\distance {5cm}

            \draw [fill=black] (0,0) circle (0.1);
            \draw [fill=black] (1,0) circle (0.1);
            \draw [fill=black] (2,0) circle (0.1);
            \draw [fill=black] (3,0) circle (0.1);
            \draw [fill=black] (0,1) circle (0.1);
            \draw [fill=black] (1,1) circle (0.1);

            \draw (0,0) -- (3,0);
            \draw (0.5,0.5) circle (0.707);

            \pgftransformxshift{\distance}

            \draw [fill=black] (0,0) circle (0.1);
            \draw [fill=black] (1,0) circle (0.1);
            \draw [fill=black] (2,0) circle (0.1);
            \draw [fill=black] (3,0) circle (0.1);
            \draw [fill=black] (0,1) circle (0.1);
            \draw [fill=black] (1,1) circle (0.1);
            \draw [fill=black] (2,1) circle (0.1);
            \draw [fill=black] (3,1) circle (0.1);

            \draw (0,0) -- (3,0);
            \draw (0,1) -- (3,1);
            \draw (0.5,0.5) circle (0.707);
        \end{tikzpicture}
    \end{center}
    Finally, use the cohomology of the right diagram to obtain the cohomology
    of $E$. However, this fails because the diagram on the right is not a
    subalgebra of $E$. Consider the atoms labeled $c$ and $g$ in $E$. Their
    sum is defined in $E$, since both lie on the right circle. But $c \eaplus
    g$ is not defined in the diagram on the right, since there is no hyperedge
    containing both $c$ and $g$. Therefore this diagram does not represent a
    subalgebra of $E$, and the Mayer--Vietoris sequence cannot be applied.

    To solve this problem, we have to build up $E$ in a different way.
    Consider the following subalgebras of $E$:
    \begin{center}
        \begin{tikzpicture}
            \def\distance {5cm}

            \draw [fill=black] (0,0) circle (0.1);
            \draw [fill=black] (1,0) circle (0.1);
            \draw [fill=black] (2,0) circle (0.1);
            \draw [fill=black] (3,0) circle (0.1);
            \draw [fill=black] (0,1) circle (0.1);
            \draw [fill=black] (1,1) circle (0.1);

            \draw (0,0) -- (3,0);
            \draw (0.5,0.5) circle (0.707);

            \pgftransformxshift{\distance}

            \draw [fill=black] (2,0) circle (0.1);
            \draw [fill=black] (3,0) circle (0.1);
            \draw [fill=black] (0,1) circle (0.1);
            \draw [fill=black] (1,1) circle (0.1);
            \draw [fill=black] (2,1) circle (0.1);
            \draw [fill=black] (3,1) circle (0.1);

            \draw (0,1) -- (3,1);
            \draw (2.5,0.5) circle (0.707);
        \end{tikzpicture}
    \end{center}
    Call the one on the left $A$ and the one on the right $B$.
    Note that both $A$ and $B$ are actual subalgebras of $E$.
    The diagrams represent isomorphic algebras, and their cohomology can be
    computed in the same way as in Example~\ref{ex:firefly}, yielding:
    \begin{center}
        \begin{tabular}{r|ccccc}
            $n$ & 0 & 1 & 2 & 3 & $\geq 4$ \\
            \hline
            $\HC^n(A),\ \HC^n(B)$ & $\mathbb{R}$ & $\mathbb{R}^4$ &
            $\mathbb{R}^5$ & $\mathbb{R}^2$ & $0$
        \end{tabular}
    \end{center}
    Since $A$ and $B$ are subalgebras and $E = A \cup B$, the Mayer--Vietoris
    sequence applies. The intersection $A \cap B$ is generated under addition
    by the elements $a, b, (a \eaplus b)^\bot, g, h, (g \eaplus h)^\bot$.
    Since $(a \eaplus b)^\bot = c \eaplus d = (g \eaplus h)^\bot$, the
    intersection has 5 atoms, and its Greechie diagram is
    \begin{center}
        \begin{tikzpicture}
            \draw [fill=black] (0,0) circle (0.1);
            \draw [fill=black] (-1,0) circle (0.1);
            \draw [fill=black] (-2,0) circle (0.1);
            \draw [fill=black] (0,1) circle (0.1);
            \draw [fill=black] (0,2) circle (0.1);

            \draw (-2,0) -- (0,0);
            \draw (0,2) -- (0,0);
        \end{tikzpicture}
    \end{center}
    We determined the cohomology of this algebra in the previous example. From
    a Mayer--Vietoris argument it follows that $E$ has the following
    cohomology:
    \begin{center}
        \begin{tabular}{r|ccccc}
            $n$ & 0 & 1 & 2 & 3 & $\geq 4$ \\
            \hline
            $\HC^n(E)$ & $\mathbb{R}$ & $\mathbb{R}^5$ &
            $\mathbb{R}^8$ & $\mathbb{R}^4$ & $0$
        \end{tabular}
    \end{center}
\end{example}

\section{Generalized Mayer--Vietoris principle}
\label{sec:GeneralizedMayerVietoris}

Theorem~\ref{thm:MayerVietoris} only gives information about unions of two
subalgebras.  Applying the theorem repeatedly to get information about unions
of more than two subalgebras can be problematic, as witnessed by
Example~\ref{ex:bike}. The problem is that the union of two subalgebras need
not be a subalgebra again. Therefore it is sometimes desirable to have a
generalization of the above statement applicable to unions of an arbitrary
number of subalgebras. We will use an effect algebraic version of the
generalized Mayer--Vietoris principle from \cite{BottTu82}. It applies to
finite orthoalgebras, and gives a method to determine their cohomology from
the cohomology of their blocks.

Let $E$ be a finite orthoalgebra. Then $E$ can be written as a union $E = B_1
\cup \cdots \cup B_m$ of its blocks. We consider cocycles on the intersections
$B_{i_1} \cap \cdots \cap B_{i_k}$, for $1 \leq i_1 < \cdots < i_k \leq m$.
Our goal will be to prove that there is a long exact sequence
\[ 
    0 \longrightarrow
    C_{\lambda}^n(E) \longrightarrow
    \bigoplus_i C_{\lambda}^n(B_i) \longrightarrow
    \bigoplus_{i_1 < i_2} C_{\lambda}^n(B_{i_1} \cap B_{i_2})
    \longrightarrow \bigoplus_{i_1 < i_2 < i_3} C_{\lambda}^n(B_{i_1} \cap B_{i_2} \cap
    B_{i_3}) \longrightarrow \cdots
\]
This sequence generalizes the short exact sequence constructed in the proof of
the binary Mayer--Vietoris sequence by also including terms for intersections
of more than two subalgebras.

First we describe the maps involved in the sequence.
There is a restriction map $r : C_{\lambda}^n(E) \to
\bigoplus_i C_{\lambda}^n(B_i)$, whose $i^\mathrm{th}$ component maps $\alpha
\in C_{\lambda}^n(E)$ to $\alpha |_{\tests_n(B_i)}$. Furthermore, we define maps
\[ 
    \delta_k :
    \bigoplus_{i_1 < \cdots < i_k}
    C_{\lambda}^n(B_{i_1} \cap \cdots \cap B_{i_k})
    \to
    \bigoplus_{i_1 < \cdots < i_{k+1}}
    C_{\lambda}^n(B_{i_1} \cap \cdots \cap B_{i_{k+1}}) \]
for $k=1,2,\ldots$.
To define $\delta_k$ on a sequence 
$\overbar{\alpha} = \left( \alpha_{i_1 \ldots i_k} \right)_{i_1 < \cdots < i_k} $,
let the component of $\delta_k(\overbar{\alpha})$ with index $i_1 < \cdots <
i_{k+1}$ be 
\[ 
    \sum_{j=1}^{k+1} (-1)^{j+1}
        \left. \alpha_{i_1 \ldots \widehat{i_j} \ldots i_{k+1}}
        \right|_{\tests_n(B_{i_1} \cap \cdots \cap B_{i_{k+1}})}
\]
Here the hat $\widehat{i_j}$ means that the index $i_j$ has been omitted.

It is helpful to work out what this map does in low degrees. Firstly, the map
\[ \delta_1 : \bigoplus_i C_{\lambda}^n(B_i) \to \bigoplus_{i<j}
    C_{\lambda}^n(B_i \cap B_j) \]
takes as input a sequence $(\alpha_i)$ of maps $\tests_n(B_i) \to \mathbb{R}$,
for $i=1,\ldots,m$. The output is a sequence $(\beta_{ij})$ for $i<j$, where
$\beta_{ij} : \tests_n(B_i \cap B_j) \to \mathbb{R}$ is the map $\alpha_j -
\alpha_i$ restricted to tests on the intersection $B_i \cap B_j$. Secondly,
the map
\[ \delta_2 : \bigoplus_{i<j} C_{\lambda}^n(B_i \cap B_j) \to \bigoplus_{i<j<k}
    C_{\lambda}^n(B_i \cap B_j \cap B_k) \]
maps a sequence $(\alpha_{ij})$, indexed by $i<j$, to the sequence
$(\beta_{ijk})$, indexed by $i<j<k$, where $\beta_{ijk}$ is the restriction of
$\alpha_{jk} - \alpha_{ik} + \alpha_{ij}$.

\begin{proposition}
    [Generalized Mayer--Vietoris Principle]
    Let $E$ be a finite orthoalgebra with blocks $B_1, \ldots, B_m$. Then the
    sequence
    \[  
        0 \longrightarrow
        C_{\lambda}^n(E)
        \overset{r}{\longrightarrow}
        \bigoplus_i C_{\lambda}^n(B_i) 
        \overset{\delta_1}{\longrightarrow}
        \bigoplus_{i_1 < i_2} C_{\lambda}^n(B_{i_1} \cap B_{i_2})
        \overset{\delta_2}{\longrightarrow}
        \bigoplus_{i_1 < i_2 < i_3} C_{\lambda}^n(B_{i_1} \cap B_{i_2} \cap
        B_{i_3})
        \overset{\delta_3}{\longrightarrow}
        \cdots
    \]
    is exact.
    \label{prop:GeneralizedMayerVietoris}
\end{proposition}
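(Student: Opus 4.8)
The plan is to prove exactness by decomposing the entire sequence over the individual tests on $E$, reducing each local piece to the augmented simplicial cochain complex of a simplex, which is acyclic, and then recovering the $\lambda$-invariant version by an averaging argument.

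First I would record the combinatorial input. For any subalgebra $C \subseteq E$, a tuple $(t_0, \ldots, t_n)$ with $t_0 \eaplus \cdots \eaplus t_n = 1$ lies in $\tests_n(C)$ precisely when all of its entries lie in $C$, since the sum is then automatically $1$ in $C$. As an intersection of subalgebras is again a subalgebra, it follows that for indices $i_1 < \cdots < i_k$ a test $t$ lies in $\tests_n(B_{i_1} \cap \cdots \cap B_{i_k})$ if and only if $t \in \tests_n(B_{i_j})$ for every $j$. Writing $I(t) = \{\, i : t \in \tests_n(B_i) \,\}$ for the set of blocks containing $t$, this says that $t$ contributes to the summand indexed by $\{i_1, \ldots, i_k\}$ exactly when $\{i_1, \ldots, i_k\} \subseteq I(t)$. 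Crucially, $I(t)$ is never empty: the entries of a test are jointly summable and therefore generate a Boolean subalgebra of $E$, which is contained in a maximal one, i.e.\ a block. Thus every test on $E$ is a test on at least one $B_i$, generalising Proposition~\ref{prop:TestUnion}.

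Next I would set invariance aside and prove exactness of the analogous sequence of \emph{full} cochain spaces $\Hom(\tests_n(-), \mathbb{R})$, without the $\lambda$-invariance constraint. Because the restriction $r$ and each $\delta_k$ act pointwise on tests, and because there are only finitely many blocks, this full sequence is the product over $t \in \tests_n(E)$ of the sequences
\[
0 \longrightarrow \mathbb{R} \longrightarrow \bigoplus_{i \in I(t)} \mathbb{R}
\longrightarrow \bigoplus_{\{i<j\} \subseteq I(t)} \mathbb{R} \longrightarrow \cdots,
\]
where the first map is the diagonal and the later maps are the alternating sums defining $\delta_k$. For fixed $t$ this is exactly the augmented simplicial cochain complex of the full simplex on the vertex set $I(t)$. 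Since $I(t) \neq \varnothing$, I can fix a vertex $i_0 \in I(t)$ and write down the standard contracting homotopy (deleting, respectively inserting, $i_0$), which shows each local complex is exact; a product of exact sequences is exact, so the full sequence is exact.

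Finally I would return to the invariant subcomplexes. With the degree $n$ fixed throughout the sequence, the condition defining $C_{\lambda}^n(-)$ is fixed-point invariance under the operator $(-1)^n \lambda$, whose $(n+1)$-st power is the identity because $n(n+1)$ is even; since we work over $\mathbb{R}$, the averaging operator $\tfrac{1}{n+1}\sum_{j=0}^{n}\bigl((-1)^n\lambda\bigr)^j$ is an idempotent with image $C_{\lambda}^n(-)$. This idempotent commutes with $r$ and with every $\delta_k$, as those maps are built from restrictions (which commute with $\lambda$) and the sign $(-1)^n$ is common to all terms; it therefore realises the invariant sequence as a direct summand of the full sequence, and a direct summand of an exact sequence is exact. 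The main obstacle is the combinatorial lemma that $I(t) \neq \varnothing$, i.e.\ that every test lies in some block: this is exactly what makes the local complexes acyclic—a full simplex rather than its boundary—and is the only place where the orthoalgebra hypothesis genuinely enters. Everything else, namely the pointwise decomposition, the contracting homotopy on a simplex, and the exactness of $\mathbb{R}$-linear invariants, is formal.
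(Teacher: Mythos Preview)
Your argument is correct and complete. It differs from the paper's proof in an instructive way. The paper works directly inside the invariant complexes $C_\lambda^n$: for $k\geq 2$ it exhibits, given a cocycle $(\alpha_{i_1\ldots i_k})$, an explicit preimage
\[
\beta_{i_1\ldots i_{k-1}}(t)=\frac{1}{\#N(t)}\sum_{j\in N(t)}\alpha_{j\,i_1\ldots i_{k-1}}(t),
\]
where $N(t)$ is the set of blocks containing the test $t$. This is precisely the standard ``cone over a vertex'' contracting homotopy on the simplex with vertex set $N(t)$, but \emph{symmetrised} over all possible cone points so that the resulting $\beta$ is automatically $\lambda$-invariant. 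You instead separate the two averaging mechanisms: first pick a single cone point to get exactness of the full (non-invariant) sequence via the pointwise decomposition over tests, and only afterwards project onto invariants using the Reynolds operator $\tfrac{1}{n+1}\sum_j((-1)^n\lambda)^j$. Your route is more modular and makes transparent that the result is a \v{C}ech-type statement (acyclicity of a simplex) plus a characteristic-zero invariants argument; the paper's route avoids introducing the non-invariant complex at all and yields an explicit formula for the splitting. Both rest on the same geometric input, namely $I(t)\neq\varnothing$, which the paper phrases as ``the entries of any test generate a Boolean subalgebra contained in some block''.
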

\begin{proof}
    To prove that $r$ is injective, suppose that $r(\alpha) = r(\beta)$ for
    certain $\alpha,\beta \in C_{\lambda}^n(E)$. Then, for each $i=1,\ldots,
    m$ and each test $s$ on $B_i$, we have $\alpha(s) = \beta(s)$. We have to
    show that $\alpha$ and $\beta$ are the same on all tests on $E$. But if
    $t$ is a test on $E$, then its entries generate a Boolean subalgebra of
    $E$. By a standard application of Zorn's Lemma, this subalgebra can be
    enlarged to a block, which has to be one of the blocks $B_i$. Thus $t$ is
    a test on $B_i$, and hence $\alpha(t) = \beta(t)$.

    The next step is proving exactness at $\bigoplus_i C_{\lambda}^n(B_i)$.
    Using the explicit description of $\delta_1$ preceding the lemma, we see
    that
    \[ (\delta_1(r(\alpha)))_{i<j} = r(\alpha)_j - r(\alpha)_i |_{\tests_n(B_i
            \cap B_j)}. \]
    The maps $r(\alpha)_i$ and $r(\alpha)_j$ agree on the intersection $B_i
    \cap B_j$, since they are both restrictions of the same map $\alpha$.
    Therefore $\delta_1 \circ r = 0$, or equivalently, $\im(r) \subseteq
    \ker(\delta_1)$.

    For the reverse inclusion, suppose that $\overbar{\alpha} \in
    \ker(\delta_1)$. Then $\alpha_i(t) = \alpha_j(t)$ for all tests $t$ on
    $B_i \cap B_j$. We seek an $\alpha \in C_{\lambda}^n(E)$ such that $\alpha
    |_{\tests_n(B_i)} = \alpha_i$ for all $i$. For a test $t$ on $E$, define
    $\alpha(t)$ as follows: since $t$ is a test on $E$, it is a test on some
    block $B_i$. Define $\alpha(t)$ to be $\alpha_i(t)$. The condition
    $\alpha_i(t) = \alpha_j(t)$ shows that this is independent of the choice
    of block, making $\alpha$ well-defined. It is clear that $\alpha$
    restricts to $\alpha_i$ on $B_i$, finishing the proof that $\im(r) =
    \ker(\delta_1)$.

    Now we will show that $\im(\delta_{k-1}) = \ker(\delta_k)$ for $k \geq 2$.
    From a standard computation it follows that $\delta_k \circ \delta_{k-1} =
    0$. Suppose that a sequence $\left( \alpha_{i_1 \ldots i_k} \right)_{i_1 <
    \cdots < i_k}$ lies in $\ker(\delta_k)$. That means that
    \begin{equation}
        \label{eq:MayerVietorisCocycle}
        \sum_{j=1}^{k+1} (-1)^{j+1} \alpha_{i_1 \ldots \widehat{i_j} \ldots i_{k+1}} = 0 
    \end{equation}
    on $\tests_n(B_{i_1} \cap \cdots \cap B_{i_k})$, for all $i_1 < \cdots <
    i_{k+1}$.

    First we extend the definition of $\alpha$ to not necessarily increasing
    sequences of indices by stipulating that interchanging two indices gives
    a minus sign:
    \[ \alpha_{i_1 \ldots i_j \ldots i_{j'} \ldots i_k} = -\alpha_{i_1 \ldots
        i_{j'} \ldots i_j \ldots i_k} \]
    In particular that means that a repeated index always gives zero.

    Define $\beta_{i_1 \ldots i_{k-1}}$ on $B_{i_1} \cap \cdots \cap
    B_{i_{k-1}}$ in the following way: given a test $t \in \tests_n(B_{i_1}
    \cap \cdots \cap B_{i_{k-1}})$, let $N(t) = \{ j \mid t \in \tests_n(B_j)
    \}$. Then define
    \[ \beta_{i_1 \ldots i_{k-1}}(t) = \frac{1}{\# N(t)} \sum_{j \in N(t)}
        \alpha_{j i_1 \ldots i_{k-1}}(t). \]
    Here we implicitly used the convention about not necessarily increasing
    sequences of indices.

    To check that $\delta_{k-1}(\beta) = \alpha$, observe that
    \[
        \left(\delta_{k-1}(\beta)(t)\right)_{i_1 < \cdots < i_k}
        = \sum_{j=1}^k (-1)^{j+1} \beta_{i_1 \ldots \widehat{i_j} \ldots i_k}(t)
        = \sum_{j=1}^k \sum_{\ell \in N(t)} \frac{(-1)^{j+1}}{\# N(t)}
        \alpha_{\ell i_1 \ldots \widehat{i_j} \ldots i_k}.
    \]
    Condition \eqref{eq:MayerVietorisCocycle} with indices $\ell, i_1,
    \ldots, i_k$ becomes
    \[  
        \alpha_{i_1 \ldots i_k} -
        \sum_{j=1}^k (-1)^{j+1} \alpha_{\ell i_1 \ldots \widehat{i_j} \ldots
            i_{k}} = 0.
    \]
    Consequently,
    \[ \left(\delta_{k-1}(\beta)(t)\right)_{i_1 < \cdots < i_k}
        = \frac{1}{\# N(t)} \sum_{\ell \in N(t)} \alpha_{i_1 \ldots i_k}
        = \alpha_{i_1 \ldots i_k} \qedhere \]
\end{proof}

In Example~\ref{ex:CohomologyFirefly}, the cohomology groups become zero above
a certain degree. This is reminiscent of topological cohomology theories,
where cohomology groups in degree higher than the dimension of a space are
zero.  There is a similar result for cohomology of effect algebras, where the
dimension is replaced by the height.

\begin{definition}
    The \emph{height} of an effect algebra $A$ is the highest
    $n$ for which there is a chain $0 = a_0 < a_1
    < \cdots < a_n = 1$ in $A$.  If such $n$ does not exist, we say that $A$
    has infinite height. The height of $A$ is denoted
    $h(A)$.
\end{definition}

If $A$ is a finite orthoalgebra, then it can be represented using its atoms
and maximal tests. The height of $A$ is then the length of the longest test,
since a maximal test $(a_0, \ldots, a_n)$ gives a chain
\[ 0 < a_0 < a_0 \eaplus a_1 < \ldots < a_0 \eaplus \cdots \eaplus a_n = 1.  \]

\begin{theorem}
    [Height Theorem]
    \label{thm:height}
    Let $E$ be a finite orthoalgebra. For any $n \geq h(E)$, the cohomology
    group $\HC^n(E)$ is zero.
\end{theorem}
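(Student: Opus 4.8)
The plan is to avoid the spectral sequence attached to the generalized Mayer--Vietoris complex and instead reduce the theorem to a single combinatorial observation about tests, exploiting the fact that cyclic cohomology in characteristic zero can be computed from \emph{normalized} cochains. Call a test $(a_0,\ldots,a_n)\in\tests_n(E)$ \emph{degenerate} if it lies in the image of some degeneracy $s_i$; by the description of the $s_i$ this happens exactly when at least one entry $a_i$ equals $0$. The statement I would isolate is the following: for every $n\ge h(E)$, every test in $\tests_n(E)$ is degenerate. Equivalently, the cyclic set $\tests(E)$ has no non-degenerate simplices in degrees $\ge h(E)$.

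First I would prove this observation. Suppose $(a_0,\ldots,a_n)$ is a test all of whose entries are nonzero, and set $s_j=a_0\eaplus\cdots\eaplus a_j$. Then $0<s_0$ because $a_0\neq 0$, and $s_{j-1}<s_j$ for each $j\geq 1$ because $a_j\neq 0$, so
\[ 0<s_0<s_1<\cdots<s_n=1 \]
is a chain of length $n+1$ in $E$. By the definition of height this forces $h(E)\ge n+1$, so $n<h(E)$. Contrapositively, once $n\ge h(E)$ no test can have all entries nonzero, i.e.\ every element of $\tests_n(E)$ is degenerate, as claimed.

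The last step is normalization. Over a field of characteristic zero the cyclic cohomology of a cyclic set coincides with the cohomology of its normalized Connes complex $\overline{C}_{\lambda}^{\bullet}(E)$, whose term in degree $n$ consists of those invariant cochains $\tests_n(E)\to\mathbb{R}$ that vanish on all degenerate tests, and the inclusion $\overline{C}_{\lambda}^{\bullet}(E)\hookrightarrow C_{\lambda}^{\bullet}(E)$ is a quasi-isomorphism (see \cite{Loday98}). For $n\ge h(E)$ the observation shows that every test is degenerate, so $\overline{C}_{\lambda}^{n}(E)=0$ and therefore $\HC^n(E)=H^n(\overline{C}_{\lambda}^{\bullet}(E))=0$.

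The main obstacle is the justification of this normalization step for Connes' invariant complex rather than for the full cyclic bicomplex, and two points need checking. The degenerate cochains must form a $\lambda$-stable subcomplex: $\lambda$ only permutes the entries of a test and hence sends degenerate tests to degenerate tests, while the simplicial identities $d_i s_i=d_{i+1}s_i=\id$ guarantee that the coboundary preserves vanishing on degeneracies. And the resulting inclusion must be a quasi-isomorphism, which is the characteristic-zero normalization theorem that I would cite from \cite{Loday98}. If one wished to sidestep that citation, the alternative would be to produce an explicit contracting homotopy on the degenerate part of $C_{\lambda}^{\bullet}(E)$ built from the maps $s_i$; constructing that homotopy, and verifying that it respects $\lambda$-invariance, is where the genuine effort would lie. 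I would favour the normalization route precisely because the $E_1$-page of the Mayer--Vietoris spectral sequence need not vanish in total degree $h(E)$ (an intersection of several blocks can be a Boolean algebra of positive height), so the naive dimension count there would still require a delicate analysis of the differentials that normalization renders unnecessary.
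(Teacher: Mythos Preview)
Your proof is correct and takes a genuinely different route from the paper. The paper argues via the generalized Mayer--Vietoris sequence (Proposition~\ref{prop:GeneralizedMayerVietoris}): it decomposes $E$ into blocks $B_1,\ldots,B_m$, breaks the resulting long exact sequence of cochain complexes into short exact sequences, and runs a descending induction using that an intersection of $k$ distinct blocks has height at most $h(E)-k+1$, together with the Boolean case from Example~\ref{ex:CohomologyPowerSet}. You bypass all of this with the observation that a nondegenerate $(n{+}1)$-test produces a strict chain of length $n{+}1$, so $\tests_n(E)$ consists entirely of degenerate tests once $n\ge h(E)$ and the normalized Connes complex vanishes in those degrees. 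Your argument is strictly more general---it uses only $h(E)<\infty$, not the orthoalgebra hypothesis or finiteness of $E$, whereas the paper needs the block decomposition---and it trades the paper's internal machinery for a single external input, the characteristic-zero normalization theorem for the Connes complex. You are right to flag that as the one nontrivial step; your checks that $\lambda$ preserves degeneracy and that $\delta$ preserves the normalized subcomplex are the correct preliminaries, and the quasi-isomorphism itself is standard and available in \cite{Loday98}. The paper's approach, by contrast, has the expository virtue of exercising the Mayer--Vietoris tools it has just built, and your closing remark about why the $E_1$-page does not vanish on the nose is precisely the reason that route requires the inductive bookkeeping the paper carries out.
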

\begin{proof}
    First note that the Height Theorem holds for finite Boolean algebras: any
    finite Boolean algebra is a power set $\Pow(m)$, and according to
    Example~\ref{ex:CohomologyPowerSet}, the Height Theorem holds for
    $\Pow(m)$.

    If $E$ is a finite orthoalgebra, then it can be written as a union of
    blocks $E = B_1 \cup \cdots \cup B_m$.
    Proposition~\ref{prop:GeneralizedMayerVietoris} gives a long exact
    sequence
    \[ 0 \longrightarrow C_{\lambda}^n(E) \overset{\delta_0}{\longrightarrow}
        A_1 \overset{\delta_1}{\longrightarrow} A_2
        \overset{\delta_2}{\longrightarrow} \cdots, \]
    where $A_k = \bigoplus_{i_1 < \cdots < i_k} C_{\lambda}^n(B_{i_1} \cap
    \cdots \cap B_{i_k})$, and $\delta_0 = r$. For each $k \geq 1$, this gives
    a short exact sequence
    \[ 0 \longrightarrow \im(\delta_{k-1}) \longrightarrow A_k
        \overset{\delta_k}{\longrightarrow} \im(\delta_k) \longrightarrow 0. \]
    This in turn gives for each $k$ a long exact sequence in cohomology:
    \[
        \cdots \longrightarrow \HC^{n-1}(\im \delta_k) \longrightarrow
        \HC^n(\im \delta_{k-1}) \longrightarrow
        \HC^n(A_k) 
        \longrightarrow \HC^n(\im \delta_k) \longrightarrow \HC^{n+1}(\im
        \delta_{k-1}) \longrightarrow
        \cdots
    \]

    Since $E$ is finite, there exists $k$ such that $A_{k'} = 0$ for all
    $k'>k$. We will show that $\HC^{n-k+j}(\im \delta_{k-j}) = 0$ for each
    $j=1,\ldots,k-1$, by induction to $j$. To prove the claim for $j=1$, first
    we will show that $\HC^{n-k+1}(A_k) = 0$.  Finite Boolean algebras are
    fixed by their height, so if $B$ and $B'$ are different Boolean
    subalgebras of $E$, then $h(B \cap B') \leq h(B)-1, h(B')-1$. Using this
    fact repeatedly yields
    \[ h(B_{i_1} \cap \cdots \cap B_{i_k}) \leq h(B_{i_1}) - k + 1 \leq h(E) -
    k + 1 \leq n - k + 1. \]
    Therefore, by the Height Theorem for finite Boolean algebras,
    $\HC^{n-k+1}(A_k)$ is zero. Now look at the following fragment of the long
    exact sequence obtained earlier:
    \[ \HC^{n-k}(\im \delta_k) \longrightarrow \HC^{n-k+1}(\im \delta_{k-1})
        \longrightarrow
        \HC^{n-k+1}(A_k) \]
    Since $A_{k+1} = 0$, the map $\delta_k$ must be the zero map, hence
    $\HC^{n-k}(\im \delta_k) = 0$. We just showed that $\HC^{n-k+1}(A_k)$ is
    zero as well. By exactness, the term in the middle must also be zero,
    proving the first step in the induction.

    Now suppose that $\HC^{n-k+j}(\im \delta_{k-j}) = 0$ for a certain $j$.
    Then, using a similar argument as in the base case, it can be shown that
    $\HC^{n-k+j+1}(A_{k-j})$ is zero. Look at the following fragment of the
    long exact sequence:
    \[ \HC^{n-k+j}(\im \delta_{k-j}) \longrightarrow \HC^{n-k+j+1}(\im \delta_{k-(j+1)})
        \longrightarrow \HC^{n-k+j+1}(A_{k-j}) \]
    The outer terms are zero, so the inner term is zero too, finishing the
    induction argument.

    We know that $\HC^{n-k+j}(\im \delta_{k-j}) = 0$ for each
    $j=1,\ldots,k-1$. In particular, taking $j=k-1$, we obtain $\HC^{n-1}(\im
    \delta_1) = 0$. There is a short exact sequence
    \[ 0 \longrightarrow C_{\lambda}^n(E) \longrightarrow A_1 \longrightarrow
        \im(\delta_1) \longrightarrow 0, \]
    hence a fragment of a long exact sequence
    \[ \HC^{n-1}(\im \delta_1) \longrightarrow \HC^n(E) \longrightarrow \HC^n(A_1) \]
    We already noted that the term on the left is zero. By the Height Theorem
    for Boolean algebras, the term on the right is zero, hence $\HC^n(E) = 0$,
    which is what we wanted to prove.
\end{proof}

\section{Applications}
\label{sec:Applications}

Many no-go theorems in physics can be phrased in terms of morphisms between
effect algebras. We will show how cohomology helps to study these no-go
theorems. 

To keep the setting concrete, we will focus on the Bell scenario. The
following description of the Bell experiment is based on
\cite{StatonU15}. In the setup there are two observers, Alice and Bob.
Alice can perform either of two measurements $a$ and $a'$, with possible
outcomes $0$ and $1$. The event ``Alice performs measurement $a$ and obtains
outcome $i$'' will be denoted by $a_i$, and similarly we define $a'_i$. Bob
can also perform either of two measurements $b$ and $b'$, again with possible
outcomes $0$ and $1$. The notations $b_i$ and $b'_i$ have the expected
meanings. After both Alice and Bob have chosen a measurement, there are four
possible joint outcomes: $(0,0)$, $(0,1)$, $(1,0)$, and $(1,1)$. Each of these
is obtained with a certain probability, indicated in the following table:
\begin{center}
    \begin{tabular}{c|cccc}
        & $(0,0)$ & $(0,1)$ & $(1,0)$ & $(1,1)$ \\
        \hline
        $(a,b)$   & 1/2 & 0 & 0 & 1/2 \\
        $(a,b')$  & 3/8 & 1/8 & 1/8 & 3/8 \\
        $(a',b)$  & 3/8 & 1/8 & 1/8 & 3/8 \\
        $(a',b')$ & 1/8 & 3/8 & 3/8 & 1/8
    \end{tabular}
\end{center}
This table of probabilities cannot be reproduced by classical physics, but
there is a quantum mechanical setup realizing exactly these probabilities.
This is the content of Bell's famous theorem showing that quantum mechanics is
fundamentally different from classical mechanics, see
\cite{Bell64,AbramskyB11}.

The effect algebraic description of the Bell experiment is as follows. All
events for Alice can be collected in an effect algebra $E_A$ with elements $0,
a_0, a_1, a'_0, a'_1, 1$. Since Alice always obtains outcome 0 or 1, the sums
$a_0 \eaplus a_1$ and $a'_0 \eaplus a'_1$ are defined and equal to 1. All
other non-trivial sums are undefined, since Alice cannot perform the
measurements $a$ and $a'$ at the same time.
Thus $E_A$ is isomorphic to the coproduct effect algebra $\Pow(2) + \Pow(2)$.
It can be shown that this is the free effect algebra on two elements.
Similarly we construct an effect algebra $E_B$ for Bob's measurements, with
elements $0, b_0, b_1, b'_0, b'_1, 1$.  Since Bob can perform essentially the
same measurements as Alice, $E_B$ is isomorphic to $E_A$.  The effect algebra
representing the full experiment is $E := E_A \otimes E_B$, since composite
systems are modeled by tensor products.

Bell's Theorem states that there is a probability distribution on this system
that cannot be reproduced by classical physics. The probability distribution
amounts to a state on $E$. More precisely, the above table of probabilities
gives rise to a state that maps e.g.\ $a_i \otimes b'_j$ to the probability
that Alice obtains outcome $i$ when she picks measurement $a$, and Bob obtains
outcome $j$ when he picks measurement $b'$.

The measurements on a classical physical system are
given by an effect algebra of the form $\Pow(X)$ for some set $X$. Thus Bell's
Theorem says that there exists a state $\sigma : E \to [0,1]$ that does not
factor through any $\Pow(X)$:
\begin{center}
    \begin{tikzpicture}
        \matrix [commutative diagram] {
            \node (E) {$E$}; &
            \node (interval) {$[0,1]$}; \\
            \node (PX) {$\Pow(X)$}; \\
        };
        \path [->] (E) edge node [mor, above] {\sigma} (interval);
        \path [right hook->] (E) edge (PX);
        \path [->] (PX) edge node [mor, below right] {\nexists} (interval);
    \end{tikzpicture}
\end{center}

In general, no-go theorems are about extending a state $\sigma : A \to [0,1]$
to a state on a larger effect algebra $B$, via an inclusion $i : A
\hookrightarrow B$. This inclusion map may be weak, i.e.\ it may not be an
actual inclusion of a subalgebra. We will now apply the cohomology theory of
effect algebras to study when extensions of states exist. Our approach is
similar to the one in \cite{AbramskyMS11}, but we use cyclic
cohomology of effect algebras instead of sheaf cohomology.

Let $A$ and $B$ be finite Archimedean interval effect algebras, and let $i : A
\hookrightarrow B$ be a weak injective morphism. Note that this assumption is
satisfied in the case of the Bell effect algebra: the power set $\Pow(2)$ is
clearly an interval effect algebra. Since the Bell effect algebra $E$ is
obtained from $\Pow(2)$ using coproducts and tensor products, it is an
interval effect algebra by Proposition~\ref{prop:ConstructionsInterval}, and
it is straightforward to check that $E$ is Archimedean.

Look at the following fragment of the long exact sequence of the pair $(B,A)$:
\[ \cdots \longrightarrow \HC^1(B) \longrightarrow \HC^1(A)
    \overset{\partial}{\longrightarrow} \HC^2(B,A) \longrightarrow \HC^2(B)
    \longrightarrow \cdots \]
By Theorem~\ref{thm:FirstCohomologyGroupStateSpace} and
Proposition~\ref{prop:FiniteEAPositivelyGenerated}, there exists an embedding
$j : \St(A) \to \HC^1(A)$, given by $j(\sigma) = \sigma - \sigma_0$ for some
fixed state $\sigma_0$. The map $j$ and the connecting homomorphism $\partial$
from the long exact sequence determine whether a state on $A$ extends to a
state on $B$.

\begin{theorem}
    Let $i : A \hookrightarrow B$ be a weak injective morphism between finite
    Archimedean interval effect algebras, and let $\sigma : A \to [0,1]$ be a
    state. If $\sigma$ extends to a state $\tau : B \to [0,1]$ for which $\tau
    \circ i = \sigma$, then the cohomology class $\partial(j(\sigma)) \in
    \HC^2(A,B)$ is zero.
\end{theorem}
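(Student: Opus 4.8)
The plan is to deduce the vanishing of $\partial(j(\sigma))$ directly from exactness of the long exact sequence of the pair $(B,A)$ displayed above. Exactness at $\HC^1(A)$ states precisely that $\ker(\partial) = \im\bigl(\HC^1(B) \to \HC^1(A)\bigr)$, where the left-hand map is induced by the restriction $p^1$, that is, by pullback along $i$. Hence $\partial(j(\sigma)) = 0$ if and only if the class $j(\sigma) \in \HC^1(A)$ is the image of some class in $\HC^1(B)$. The entire problem thus reduces to lifting $j(\sigma)$ along $i$, and the given extension $\tau$ of $\sigma$ should supply exactly such a lift.

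First I would unwind the identifications of Section~\ref{sec:CyclicCohomologyEffectAlgebra}. A class in $\HC^1(B)$ is an additive map $\beta : B \to \mathbb{R}$ with $\beta(b^\bot) = -\beta(b)$ (equivalently $\beta(1)=0$), and the restriction map carries $[\beta]$ to $[\beta \circ i]$. So I must produce an additive $\beta : B \to \mathbb{R}$ with $\beta(1)=0$ and $\beta \circ i = j(\sigma) = \sigma - \sigma_0$. The natural candidate is $\beta = \tau - \tau_0$, where $\tau$ is the given state extending $\sigma$ and $\tau_0$ is a state on $B$ extending the reference state $\sigma_0$. A direct check shows $\beta$ is additive, satisfies $\beta(b^\bot) = (1-\tau(b)) - (1-\tau_0(b)) = -\beta(b)$, and pulls back along $i$ to $\tau\circ i - \tau_0 \circ i = \sigma - \sigma_0 = j(\sigma)$; thus $[\beta] \in \HC^1(B)$ restricts to $j(\sigma)$, and the conclusion follows.

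The one genuinely non-formal step, and the place where I expect to have to be careful, is securing an extension $\tau_0$ of the reference state $\sigma_0$, since the hypothesis only provides an extension of $\sigma$ itself. I would resolve this by arranging the basepoint $\sigma_0$ defining $j$ to be extendable from the outset. This is always possible: because $B$ is a finite Archimedean interval effect algebra, Theorem~\ref{ThmArchimedeanOrderDetermining} guarantees $\St(B) \neq \varnothing$, so one may fix any $\tau_0 \in \St(B)$ and take $\sigma_0 := \tau_0 \circ i$ as the reference state. With this choice the candidate $\beta = \tau - \tau_0$ is available, and the argument closes. Note that the dependence on $\sigma_0$ is essential, as $\partial\circ j$ genuinely shifts by $\partial(\sigma_0' - \sigma_0)$ when the basepoint is changed.

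Alternatively one can bypass exactness and compute $\partial(j(\sigma))$ straight from the definition of the connecting homomorphism: lift $j(\sigma)$ to any $\bot$-antisymmetric cochain $\widetilde{\gamma} \in C_{\lambda}^1(B)$, so that $\partial(j(\sigma)) = [\delta^1 \widetilde{\gamma}] \in \HC^2(B,A)$; since the cocycle $\beta = \tau - \tau_0$ is itself such a lift with $\delta^1 \beta = 0$, one obtains $\partial(j(\sigma)) = 0$ again. Either route has the same crux: an honest state extension converts the abstract coboundary obstruction into an explicit liftable $1$-cocycle on $B$.
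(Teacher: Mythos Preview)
Your proposal is correct and aligns with the paper's proof: the paper likewise fixes a state $\tau_0$ on $B$, sets $\sigma_0 = \tau_0 \circ i$, and observes that $\tau - \tau_0$ is an additive complement-preserving lift of $j(\sigma) = \sigma - \sigma_0$ whose coboundary vanishes. Your ``alternative'' route via the explicit connecting homomorphism is in fact exactly the paper's argument, while your primary route via exactness at $\HC^1(A)$ is a trivially equivalent reformulation; the one subtlety you flagged---choosing $\sigma_0$ to be restrictable from $B$---is handled identically in the paper.
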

\begin{proof}
    It is useful to have an explicit description of the connecting
    homomorphism $\partial$. Take a cohomology class $x \in \HC^1(A)$ and
    represent it by a map $\varphi : A \to \mathbb{R}$ satisfying
    $\varphi(a^{\bot}) = -\varphi(a)$. Since $i$ is injective, $\varphi$
    extends to a map $\psi : B \to \mathbb{R}$ with $\psi \circ i = \varphi$
    and $\psi(b^{\bot}) = -\psi(b)$. Applying the coboundary map $\delta$ to
    $\psi$ gives the 2-cocycle
    \[ (\delta\psi)(b,b') = \psi(b') - \psi(b \eaplus b') - \psi(b), \]
    which is defined on all pairs $(b,b')$ for which $b \eaplus b'$ exists.
    Then $\partial(x)$ is the relative cohomology class represented by
    $\delta\psi$.

    Suppose that the state $\sigma \in \St(A)$ extends to a state $\tau$ on
    $B$. Let $\tau_0$ be any state on $B$, and let $\sigma_0 = \tau_0 \circ
    i$. This gives the embedding $j(\sigma) = \sigma - \sigma_0$.
    Since $\tau$ extends $\sigma$, we have $(\tau-\tau_0) \circ i = \sigma -
    \sigma_0$, and $\tau-\tau_0$ is an additive map preserving complements.
    Therefore, by our description of the connecting homomorphism,
    $\partial(j(\sigma)) = \delta(\tau-\tau_0)$. But since $\tau-\tau_0$ is
    additive, its coboundary is zero, as required.
\end{proof}

Unfortunately, the converse does not hold, so false positives may arise.

\section{Order cohomology}
\label{sec:OrderCohomology}

Cyclic cohomology provides a necessary criterion for extending a state on an
effect algebra to a larger one, but not a sufficient criterion. The problem is
that positivity of the state is not encoded in the first cohomology group.
One can show that the coboundary of a state is zero if and only if it extends
to a signed state, i.e.\ one with possibly negative values. We will now define
a new cohomology theory of effect algebras that takes order, and hence
positivity, into account. This will lead to a necessary and sufficient
criterion for extending states.

The ideas behind cohomology of effect algebras that takes the order into
account have been studied before in \cite{Pulmannova06} and \cite{FeldmanW98},
although both of these only define a structure that behaves like a second
cohomology group. Our definition is a variation of Pulmannov\'a's cohomology
from \cite{Pulmannova06}, but generalized to give cohomology in arbitrary
degrees.

Defining cohomology of effect algebras with coefficients in an
ordered abelian group involves morphisms between these two structures.
Therefore we need a common generalization of effect algebras and ordered
abelian groups, to ensure that both live in the same category. Similar
structures have been considered in \cite{Pulmannova06,Wilce95}.

An \emph{ordered partial commutative monoid} is a partial commutative monoid
$A$ equipped with a positive cone $P \subseteq A$, for which:
\begin{itemize}
    \item $0 \in P$.
    \item If $a,b \in P$ and $a\eaplus b$ is defined, then $a \eaplus b \in
        P$.
    \item For $a,b \in P$, if $a \eaplus b = 0$, then $a = b = 0$.
\end{itemize}
We will write $A\positive$ for the positive cone $P$ of $A$. Any ordered
partial commutative monoid carries an order defined by $a \leq b$ if and only
if there exists $c \in A\positive$ such that $a \eaplus c = b$. It is
straightforward to show that this forms a partial order.

\begin{examples}
    \mbox{}
    \begin{enumerate}
        \item Any ordered abelian group is an ordered partial commutative
            monoid, in which the addition operation is total, and in which
            every element has an inverse.
        \item Any effect algebra $A$ is an ordered partial commutative monoid.
            The positive cone is simply all of $A$.
        \item Any partial commutative monoid $A$ can be made into an ordered
            partial commutative monoid by endowing it with the trivial cone
            $\{ 0 \}$. The resulting order is an antichain. The resulting
            structure is called a \emph{discrete} partial commutative monoid
            and denoted $\Disc(A)$.
    \end{enumerate}
\end{examples}

A morphism of ordered partial commutative monoids is just a morphism of their
underlying partial monoids. Such a morphism $f : A \to B$ is called
\emph{positive} if $f(A\positive) \subseteq B\positive$. A morphism is
positive if and only if it preserves the order. Furthermore, we say that $f$
is \emph{strong} if the condition that $f(a) \eaplus f(b)$ is defined implies
that also $a \eaplus b$ is defined.

\begin{definition}
    Let $f : A \to B$ be a morphism between ordered partial commutative
    monoids. The \emph{precone} of $f$ is $\precone(f) = f^{-1}(B\positive)
    \subseteq A$.
\end{definition}

The precone of a morphism $f : A \to B$ is again an ordered partial
commutative monoid, with addition and order inherited from $A$.
The restricted morphism $f |_{\precone(f)}$ is always a positive morphism, so
the precone construction is a way to transform non-positive morphisms into
positive morphisms, albeit in a somewhat trivial way.

If $B$ is discrete, then the precone of $f$ is simply its kernel. Hence
precones generalize kernels to the ordered setting. The kernel is a
fundamental operation for many constructions in homological algebra. We will
see that many results from homological algebra generalize to the setting of
ordered abelian groups, or ordered partial commutative monoids, by replacing
all kernels with precones.

The fundamental notion from homological algebra is a chain complex. Since we
will mainly use cohomology, we will work with cochain complexes. In the
ordered setting we define a 
\emph{cochain complex} to be a sequence
\[ 0 \longrightarrow A_0 \overset{\delta}{\longrightarrow}
    A_1 \overset{\delta}{\longrightarrow} \cdots, \]
where each $A_i$ is an ordered abelian group, each $\delta$ is a (not necessarily
positive) homomorphism, and $\delta \circ \delta = 0$. Define the collection of
$n$-cocycles by $\Zord{n}(A) = \{ a \in A_n \mid a \in \precone(\delta) \}$. The
index $\leq$ indicates that we take the order into account by using a precone
instead of a kernel. Since $\delta \circ \delta = 0$, we have $\im(\delta)
\subseteq \ker(\delta)
\subseteq \precone(\delta)$, so we can define order cohomology as
\[ \Hord{n}(A) = \precone(\delta) / \im(\delta). \]

The precone of a morphism between ordered abelian groups is an ordered
commutative monoid. The equivalence relation defined above is compatible with
addition, but not with the order,
so $\Hord{n}(A)$ is a commutative monoid.

In ordinary homological algebra, the cohomology of a quotient complex is
related to the cohomology of the larger complex via relative cohomology. We
will define relative cohomology of ordered abelian groups here, and show that
there is a sequence that captures some of its properties.

Let $p : B^{\bullet} \to A^{\bullet}$ be a surjective positive morphism of
cochain complexes. Then $p$ restricts to a map $\Zord{n}(B) \to \Zord{n}(A)$
because it is positive. Define the collection of relative cocycles by
$\Zord{n}(A,B) = \precone(\delta) \cap \precone(p)$. Put an equivalence
relation $\sim$ on $\Zord{n}(A,B)$ by $a \sim b$ if and only if there exists
$c$ such that $a-b = \delta(c)$ and $p(c) = 0$. Then the relative cohomology
of the pair $(B^{\bullet},A^{\bullet})$ is the quotient $\Hord{n} =
\Zord{n}/\sim$.

Just like for ordinary cohomology, it is possible to construct a sequence
\[ \cdots \longrightarrow \Hord{n-1}(A) \longrightarrow \Hord{n}(B,A)
    \longrightarrow \Hord{n}(B) \longrightarrow \Hord{n}(A) \longrightarrow
    \Hord{n+1}(B,A) \longrightarrow \cdots \]
This sequence will not turn out to be exact, but it does satisfy a related
property. The maps $\Hord{n}(B,A) \to \Hord{n}(B)$ are induced by the
inclusions $\Zord{n}(B,A) \to \Zord{n}(B)$, and the maps $\Hord{n}(B) \to
\Hord{n}(A)$ by $p$. The connecting homomorphism $\partial : \Hord{n}(A) \to
\Hord{n+1}(B,A)$ is manufactured as follows. Take any $x \in \Hord{n}(A)$ and
represent it by $a \in \Zord{n}(A)$. By surjectivity of $p$, there exists a $b
\in B^n$ for which $p(b) = a$. Then $\delta(b)$ is an element of
$\Zord{n+1}(B,A)$, because $\delta(\delta(b)) = 0$ and $p(\delta(b)) =
\delta(p(b)) = \delta(a) \geq 0$, where we used that $a \in \Zord{n}(A) =
\precone(\delta)$. Let $\partial(x)$ be the cohomology class of $\delta(b)$ in
$\Hord{n+1}(B,A)$. This does not depend on the choice of $b$, since if both
$p(b)$ and $p(b')$ are equal to $a$, then $c := b'-b$ satisfies $\delta(b') -
\delta(b) = \delta(c)$ and $p(c) = 0$, so $\delta(b) \sim \delta(b')$.

An exact sequence is a sequence in which the image of each morphism is the
kernel of the next one. In accordance with our general theme of replacing
kernels with precones, we wish to show that in order cohomology the image of
each morphism is the precone of the next one. Observe that the cohomology
monoids are not ordered in general, so it is not immediately clear what the
precone of a map between them should be. However, there is always a pre-order
on $\Hord{n}(A)$, defined in the following way:
let $a,b \in A^n$, and let $[a], [b]$ be the corresponding cohomology classes.
We say that $[a] \leq [b]$ if and only if there exists $c \in A^{n-1}$ such
that $a + \delta(c) \leq b$ in $A^n$.

\begin{lemma}
    The relation $\leq$ is a well-defined pre-order on $\Hord{n}(A)$.
\end{lemma}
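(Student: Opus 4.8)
The plan is to verify two things: that the relation $\leq$ does not depend on the choice of cocycle representatives, and that it is reflexive and transitive. Reflexivity and transitivity will follow immediately from the definition together with the translation-invariance of the order on an ordered abelian group; the only part that requires genuine care is well-definedness.

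For well-definedness I would start with classes $[a],[b] \in \Hord{n}(A)$ and suppose they are also represented by $a'$ and $b'$. By definition of the equivalence modulo $\im(\delta)$, this means $a - a' = \delta(u)$ and $b - b' = \delta(v)$ for some $u,v \in A^{n-1}$ (note $a,a',b,b' \in \precone(\delta)$, but the witnesses $u,v$ are arbitrary elements of $A^{n-1}$). Assuming $[a] \leq [b]$ with a witness $c$, so that $a + \delta(c) \leq b$ in $A^n$, I would substitute $a = a' + \delta(u)$ and $b = b' + \delta(v)$ to obtain $a' + \delta(u) + \delta(c) \leq b' + \delta(v)$. Because $\delta$ is a homomorphism and the order on $A^n$ is translation-invariant, subtracting $\delta(v)$ from both sides and collecting terms yields $a' + \delta(u + c - v) \leq b'$. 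Thus $c' = u + c - v$ witnesses $[a'] \leq [b']$. The converse direction is symmetric, so the relation does not depend on the chosen representatives.

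For reflexivity I would take the witness $c = 0$, so that $a + \delta(0) = a \leq a$. For transitivity, suppose $[a] \leq [b]$ and $[b] \leq [d]$, witnessed by $p$ and $q$ respectively, so that $a + \delta(p) \leq b$ and $b + \delta(q) \leq d$. Adding $\delta(q)$ to both sides of the first inequality and again using translation-invariance gives $a + \delta(p) + \delta(q) \leq b + \delta(q) \leq d$; since $\delta(p) + \delta(q) = \delta(p+q)$, the element $p+q$ witnesses $[a] \leq [d]$.

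I expect well-definedness to be the only real content, and even that is mild: the whole argument rests on $\delta$ being additive and on the order of each $A^n$ being compatible with addition, both of which hold because the $A^n$ are ordered abelian groups. In particular no positivity of $\delta$ is used anywhere, which is consistent with $\delta$ being permitted to be a non-positive homomorphism in the definition of an (ordered) cochain complex.
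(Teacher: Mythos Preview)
Your proof is correct and follows essentially the same approach as the paper: the paper also verifies well-definedness by producing the witness $c' = u + c - v$ (in its notation $a'' - b'' + c$) via the same substitution, and then dismisses reflexivity and transitivity as clear. Your version is slightly more detailed in spelling those last two out, but the argument is the same.
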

\begin{proof}
    Suppose that $a \sim a'$ and $b \sim b'$, and that $a + \delta(c) \leq b$.
    Then there are $a''$ and $b''$ such that $a-a' = \delta(a'')$ and $b-b' =
    \delta(b'')$. Let $c' = a'' - b'' + c$, then
    \[ a' + \delta(c') = a' + a - a' - b + b' + \delta(c) \leq b - b + b' = b'. \]
    Hence the order does not depend on the choice of representatives. It is
    clear that $\leq$ is reflexive and transitive.
\end{proof}

Likewise, on the relative cohomology monoid $\Hord{n}(B,A)$ we define $[a]
\leq [b]$ if and only if there exists $c \in B^{n-1}$ such that $a + \delta(c)
\leq b$ and $p(c) = 0$.

\begin{proposition}
    \label{prop:PreconeExactSequence}
    In the sequence $\Hord{n}(B) \overset{p}{\longrightarrow} \Hord{n}(A)
    \overset{\partial}{\longrightarrow} \Hord{n+1}(B,A)$, we have
    $\precone(\partial) = \im(p)$.
\end{proposition}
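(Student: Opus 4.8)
The plan is to unwind the definitions of $\partial$ and of the precone on the relative cohomology monoid, and then verify the two inclusions $\im(p) \subseteq \precone(\partial)$ and $\precone(\partial) \subseteq \im(p)$ by manipulating representatives in the underlying ordered abelian groups $A^n$ and $B^n$. Recall that $x \in \precone(\partial)$ means $[0] \leq \partial(x)$ in the pre-order on $\Hord{n+1}(B,A)$, and that by the explicit construction of the connecting homomorphism, if $a \in \Zord{n}(A)$ represents $x$ and $b \in B^n$ is any lift with $p(b) = a$, then $\partial(x) = [\delta(b)]$. Since the cohomology objects are monoids rather than groups, the argument must stay at the level of representatives: I cannot subtract cohomology classes, but I can freely subtract elements of the abelian groups $A^n$ and $B^n$ before passing to classes.

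For the inclusion $\im(p) \subseteq \precone(\partial)$, I would start from $x = p(y)$ with $y \in \Hord{n}(B)$, choose a representative $b' \in \Zord{n}(B) = \precone(\delta)$ of $y$, so that $\delta(b') \geq 0$, and take $a := p(b')$ as a representative of $x$. This is legitimate, because positivity of $p$ gives $\delta(p(b')) = p(\delta(b')) \geq 0$, hence $a \in \Zord{n}(A)$. The point is that $b'$ is then itself an admissible lift of $a$, so $\partial(x) = [\delta(b')]$; and $\delta(b') \geq 0$ yields $[0] \leq [\delta(b')]$ by taking $c = 0$ in the definition of the relative pre-order. Thus $x \in \precone(\partial)$.

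For the reverse inclusion I would take $x \in \precone(\partial)$, choose a representative $a \in \Zord{n}(A)$ and a lift $b \in B^n$ with $p(b) = a$, so $\partial(x) = [\delta(b)]$. The hypothesis $[0] \leq [\delta(b)]$ unwinds, via the definition of the pre-order on $\Hord{n+1}(B,A)$, to the existence of $c \in B^n$ with $p(c) = 0$ and $\delta(c) \leq \delta(b)$. Setting $b' := b - c$ then gives $p(b') = a$ and $\delta(b') = \delta(b) - \delta(c) \geq 0$, so $b' \in \Zord{n}(B)$, and its class satisfies $p([b']) = [p(b')] = [a] = x$. This exhibits $x \in \im(p)$.

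The main obstacle is the bookkeeping around well-definedness: I must confirm that $[0] \leq \partial(x)$ is a property of the class $x$ alone, independent of the chosen representative $a$ and lift $b$, so that I may replace them by the convenient choices above. For this I would lean on the fact—already established when $\partial$ was constructed—that $\partial$ descends to classes, together with the preceding lemma guaranteeing that $\leq$ is a well-defined pre-order (whose relative variant is defined immediately afterwards). Once these are in hand, the representative-level identities $a = p(b')$ and $b' = b - c$ go through directly, and no further estimates are needed.
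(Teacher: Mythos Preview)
Your proposal is correct and follows essentially the same route as the paper: both directions are handled by choosing a representative and lift, unwinding the relative pre-order condition $[0]\leq[\delta(b)]$ to obtain $c\in B^n$ with $p(c)=0$ and $\delta(c)\leq\delta(b)$, and then passing to $b-c\in\Zord{n}(B)$. Your write-up is simply more explicit about the bookkeeping (e.g.\ verifying $p(b')\in\Zord{n}(A)$ via positivity of $p$), but the argument is the same.
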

\begin{proof}
    Suppose that $x \in \precone(\partial)$. Represent it by $a \in
    \Zord{n}(A)$, then there exists $b \in B^n$ such that $\delta(b)$ is
    positive in cohomology, and $p(b) = a$. Positivity in cohomology means
    that there exists $c$ such that $\delta(b) \geq \delta(c)$ and $p(c) = 0$.
    Define $d = b - c$, then $d$ lies in $\Zord{n}(B)$ because $\delta(b) \geq
    \delta(c)$. Furthermore $p(d) = p(b) - p(c) = a$, hence $x = [a] \in
    \im(p)$.

    Conversely, take $x \in \im(p)$ and represent $x$ by $a \in \Zord{n}(A)$.
    Then $a = p(b)$ for some $b \in \Zord{n}(B)$. It suffices to show that
    $[\delta(b)] \geq 0$. Since $b \in \Zord{n}(B)$, we have $\delta(b) \geq
    0$, therefore $[\delta(b)] \geq 0$.
\end{proof}

Similarly one can prove that $\precone(p) = \im(i)$.
Unfortunately it is not the case in general that $\precone(i) =
\im(\partial)$, but we will only need the property from
Proposition~\ref{prop:PreconeExactSequence}.

We will now specialize the homological algebra theory above to obtain order
cohomology of an effect algebra.
Let $E$ be an effect algebra, and let $A$ be an ordered abelian group. We
wish to define order cohomology of $E$ with coefficients in $A$. Often our
coefficient group will be $\mathbb{R}$.

Define the abelian group $C^n(E;A) = A^{\tests_n(E)}$ of maps from
$(n+1)$-tests on $E$ to $A$. To avoid cluttered notation, we will often
suppress the coefficient group $A$. The group $C^n(E)$ forms an ordered
abelian group with pointwise positive cone $C^n(E;A)\positive =
(A\positive)^{\tests_n(E)}$. We will construct a cochain complex out of the
groups
\[ \mathcal{C}^n(E;A) = \Disc(C^n(E;A)) \times C^{n-1}(E;A). \]
Each $\mathcal{C}^n(E)$ is an ordered abelian group with positive cone $\{ 0
\} \times C^{n-1}(E;A)\positive$. 

The groups $C^n(E)$ already form a cochain complex with the usual coboundary
maps $\delta : C^n(E) \to C^{n+1}(E)$, given by an alternating sum over
boundary maps.
We make the groups $\mathcal{C}^n(E)$ into a cochain complex by defining
coboundaries
\[ \delta^{\mathcal{C}}(\varphi, \psi) = (\delta \varphi, \varphi - \delta
    \psi). \]
When no confusion is possible, we will write $\delta^{\mathcal{C}}$ simply as
$\delta$. From the fact that $\delta^2 = 0$ it easily follows that also
$(\delta^{\mathcal{C}})^2 = 0$, so this is indeed a cochain complex. The
resulting order cohomology monoids $\CHord{n}(E;A) = \precone(\delta) /
\im(\delta)$ are the cohomology of $E$ with coefficients in $A$. From now on
we will assume that our coefficient group $A$ is $\mathbb{R}$ and write
$\CHord{n}(E;\mathbb{R})$ as $\CHord{n}(E)$.

We will determine the order cohomology monoids of an effect algebra $E$ in low
degrees. We have $\mathcal{C}^0(E) = \Disc(C^0(E)) \cong \Disc(\mathbb{R})$.
For the cochain complex in degree 1, we will use that
$\tests_1(E)$ can be identified with $E$, by letting $(a_0, a_1) \in
\tests_1(E)$ correspond to $a_1 \in E$. Hence $\mathcal{C}^1(E) \cong
\Disc(\mathbb{R}^E) \oplus \mathbb{R}$. The coboundary map $\delta :
\mathcal{C}^0(E) \to \mathcal{C}^1(E)$ is given by
\[ \delta^0 : \Disc(\mathbb{R}) \to \Disc(\mathbb{R}^E) \oplus \mathbb{R},
    \quad r \mapsto (\delta(r),r) = (0,r) \]
The zeroth cohomology monoid is $\CHord{0}(E) = \precone(\delta^0) =
\mathbb{R}_{\geq 0}$.

We continue with the first cohomology monoid. For this we will identify
$\tests_2(E)$ with $\{ (a,b) \mid a,b \in E, a \eaplus b \text{ is defined}
\}$, again by letting a 3-test $(a,b,c)$ correspond to $(b,c)$. We have
$\mathcal{C}^2(E) \cong \Disc(\mathbb{R}^{\tests_2(E)}) \oplus \mathbb{R}^E$,
and the coboundary $\delta^1 : \mathcal{C}^1(E) \to \mathcal{C}^2(E)$
satisfies
\[ \delta^1(\varphi, r) = \left( ((a,b) \mapsto \varphi(b) - \varphi(a \eaplus b) +
    \varphi(a)), \varphi \right). \]
By definition of the positive cone on $\mathcal{C}^2(E)$, the precone of
$\delta^1$ consists of those pairs $(\varphi : E \to \mathbb{R}, r\in
\mathbb{R})$ for which $\varphi(b) - \varphi(a \eaplus b) + \varphi(a) = 0$
whenever $a \eaplus b$ is defined, and $\varphi \geq 0$. In other words,
an element of $\precone(\delta^1)$ is a map $E \to \mathbb{R}_{\geq 0}$ that
preserves addition, together with a real number. In cohomology, two of these
elements are identified whenever their difference is a coboundary, which
happens if and only if it is of the form $(0,r)$. Hence a pair $(\varphi,r)$
is equivalent to $(\psi, s)$ precisely when $\varphi = \psi$. Thus the second
component of the pair collapses in cohomology, i.e.
\[ \CHord{1}(E) \cong \{ \varphi : E \to \mathbb{R}_{\geq 0} \mid \varphi(a
    \eaplus b) = \varphi(a) + \varphi(b) \}. \]

In particular, any state on $E$ is a member of the first cohomology monoid, so
it is possible to perform a construction similar to the one in
Section~\ref{sec:Applications}. Assume that $E$ lies in a larger effect
algebra $F$, via an inclusion $E \hookrightarrow F$. We wish to know when a
state on $E$ can be extended to a state on $F$. The sequence for relative
cohomology obtained earlier gives a connecting homomorphism $\partial :
\CHord{1}(E) \to \CHord{2}(F,E)$. Since $\St(E) \subseteq \CHord{1}(E)$, the
connecting homomorphism can be applied to any state on $E$.

\begin{theorem}
    Let $i : E \hookrightarrow F$ be an injective morphism of effect
    algebras, and let $\sigma : E \to [0,1]$ be a state. The following are
    equivalent:
    \begin{enumerate}
        \item The state $\sigma$ extends to a state $\tau$ on $F$, for which
            $\tau \circ i = \sigma$.
        \item The state $\sigma$ lies in the precone of the connecting
            homomorphism $\partial : \CHord{1}(E) \to \CHord{2}(F,E)$.
    \end{enumerate}
\end{theorem}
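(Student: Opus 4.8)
The plan is to unwind all three layers of machinery --- the connecting homomorphism $\partial$, the pre-order on the relative cohomology monoid, and the precone --- and watch them collapse into the single concrete statement that $\sigma$ admits a positive additive extension to $F$. The mathematical content is small; the work is in decoding the definitions carefully.

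First I would compute $\partial(\sigma)$ explicitly. Represent $\sigma \in \CHord{1}(E)$ by the cocycle $(\sigma, r) \in \Zord{1}(E)$ (the second coordinate is irrelevant, as the cohomology class does not see it). Using surjectivity of $p$, lift it to $(\tilde\sigma, r) \in \mathcal{C}^1(F)$, where $\tilde\sigma : F \to \mathbb{R}$ is any real-valued function with $\tilde\sigma \circ i = \sigma$. Since the degree-zero coboundary kills constants ($\delta r = 0$, exactly as in the computation of $\HC^0$), applying $\delta^{\mathcal{C}}$ yields
\[ \partial(\sigma) = \bigl[\, (\delta\tilde\sigma,\ \tilde\sigma) \,\bigr] \in \CHord{2}(F,E), \]
where $(\delta\tilde\sigma)(a,b) = \tilde\sigma(b) - \tilde\sigma(a \eaplus b) + \tilde\sigma(a)$. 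This representative indeed lies in $\Zord{2}(F,E)$: it is a cocycle, and $p$ sends it to $(0, \sigma)$, which is positive because $\sigma$ is.

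Next I would translate the condition $\sigma \in \precone(\partial)$, i.e.\ $\partial(\sigma) \geq 0$ in the relative pre-order. By definition this means there is $c = (\gamma, s) \in \mathcal{C}^1(F)$ with $p(c) = 0$ and $\delta^{\mathcal{C}}(c) \leq (\delta\tilde\sigma, \tilde\sigma)$. The condition $p(c) = 0$ forces $s = 0$ and $\gamma|_E = 0$, and then $\delta^{\mathcal{C}}(c) = (\delta\gamma, \gamma)$. Because the positive cone of $\mathcal{C}^2(F)$ is $\{0\} \times C^1(F)\positive$, the inequality splits into $\delta(\tilde\sigma - \gamma) = 0$ and $\tilde\sigma - \gamma \geq 0$. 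Setting $\tau := \tilde\sigma - \gamma$, these say precisely that $\tau : F \to \mathbb{R}_{\geq 0}$ is positive and additive, and since $\gamma$ vanishes on $E$ we have $\tau \circ i = \sigma$.

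The remaining step is to see that such a $\tau$ is exactly a state extending $\sigma$. A state is a positive additive map sending $1$ to $1$; additivity then gives $\tau(a^\bot) = \tau(1) - \tau(a)$ and $\tau(a) \leq \tau(1)$ for all $a$. As $\tau$ extends $\sigma$ and $i(1) = 1$, we get $\tau(1) = \sigma(1) = 1$, whence $\tau(a) \in [0,1]$, so $\tau$ is genuinely a state. Both implications now read off the equivalences above: from an extension $\tau$ I set $\gamma = \tilde\sigma - \tau$ and $c = (\gamma, 0)$ to witness $\partial(\sigma) \geq 0$, and conversely a witness $c$ produces the state $\tau = \tilde\sigma - \gamma$. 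I expect the only real obstacle to be bookkeeping: keeping the two coordinates of $\mathcal{C}^{\bullet} = \Disc(C^{\bullet}) \times C^{\bullet - 1}$ straight, correctly identifying when $\delta$ annihilates a cochain, and faithfully rendering the abstract precone condition on the cohomology monoid as the componentwise inequality --- once that is set up, the argument is a direct unwinding with no hard estimates.
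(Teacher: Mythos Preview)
Your proof is correct. The main difference from the paper is organizational: the paper first establishes the general identity $\precone(\partial) = \im(p)$ for the sequence $\Hord{n}(B) \to \Hord{n}(A) \to \Hord{n+1}(B,A)$ as a separate proposition, and then the theorem becomes a two-line application --- one invokes that proposition to get an additive positive $\tau \in \CHord{1}(F)$ with $\tau \circ i = \sigma$, and then checks (exactly as you do) that $\tau(1) = \sigma(1) = 1$ forces $\tau$ to land in $[0,1]$. You instead inline the proof of that proposition in this specific degree, explicitly computing $\partial(\sigma) = [(\delta\tilde\sigma,\tilde\sigma)]$ and unpacking the relative pre-order to extract the witness $\tau = \tilde\sigma - \gamma$. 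The mathematical content is identical; your approach is more self-contained and concrete, while the paper's is more modular and makes clear that the theorem is a formal consequence of the ``precone-exactness'' of the long sequence rather than anything special about degree $1$.
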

\begin{proof}
    If $\sigma$ extends to a state on $F$, then $\sigma$ lies in the image of
    the restriction map $p : \tau \mapsto \tau \circ i$. By
    Proposition~\ref{prop:PreconeExactSequence}, $\sigma$ is an element of
    $\precone(\partial)$.

    Conversely, if $\sigma \in \precone(\partial)$, then by the same
    proposition, it is of the form $\tau \circ i$ for some $\tau \in
    \CHord{1}(F)$. It remains to be checked that $\tau$ is a state. Since
    $\tau$ lies in the first cohomology monoid, it is an additive map $F \to
    \mathbb{R}_{\geq 0}$. Furthermore $\tau(1) = \tau(i(1)) = \sigma(1) = 1$,
    since $\sigma$ is a state. For any $a \in F$, we have
    \[ \tau(a) + \tau(a^\bot) = \tau(a \eaplus a^\bot) = 1, \]
    hence $\tau(a) \in [0,1]$ since $\tau$ maps into the positive reals. This
    proves that $\tau$ is an additive map $F \to [0,1]$ preserving $1$, in
    other words, a state.
\end{proof}

We conclude that order cohomology of effect algebras provides a method to
check whether states on an effect algebra extend to states on a larger effect
algebra, without any false positives.

\begin{example}
    The Bell state $\sigma : E_A \otimes E_B \to [0,1]$ is not classically
    realizable, in the sense that it does not factor through any power set.
    Therefore, for any set $X$, the state $\sigma$ does not lie in
    $\precone(\partial : \CHord{1}(E_A \otimes E_B) \to \CHord{2}(\Pow(X),E_A
    \otimes E_B))$.

    On the other hand, the Bell state is quantum realizable. This means that
    there exists a Hilbert space $H$ such that $\sigma$ factors through the
    projection lattice $\Proj(H)$. Observe that $\Proj(H)$ is an effect
    algebra because it is an orthomodular lattice. The above theorem tells us
    that $\sigma \in \precone(\partial : \CHord{1}(E_A \otimes E_B) \to
    \CHord{2}( \Proj(H),E_A \otimes E_B))$.
\end{example}

\paragraph{Acknowledgements.}
The author is grateful to Ieke Moerdijk for his guidance during this research
project. Furthermore, thanks are due to Pieter Hofstra, Bert Lindenhovius,
Philip Scott, Sander Uijlen, and Bram Westerbaan for helpful discussions.
This research has been financially supported by the Netherlands Organisation
for Scientific Research (NWO) under TOP-GO grant no.\ 613.001.013 (The logic
of composite quantum systems).


\bibliographystyle{eptcs}
\bibliography{effalgcohom}

\end{document}